\numberwithin{equation}{section}
\theoremstyle{plain}
\newtheorem{thm}{Theorem}[section]
\newtheorem*{GPconj}{Gan--Gross--Prasad conjecture}
\newtheorem*{main}{Main Theorem}
\theoremstyle{definition}
\theoremstyle{remark}
\newtheorem{rem}[thm]{Remark}
\newtheorem{con}{Conjecture}
\newcommand{\half}{\frac{1}{2}}
\def\disc{\operatorname{disc}}
\def\Hom{\operatorname{Hom}}
\def\Irr{\operatorname{Irr}}
\def\N{\operatorname{N}}
\def\Tr{\operatorname{Tr}}
\def\beqn{\begin{equation}}
\def\eeqn{\end{equation}}
\def\beqnn{\begin{equation*}}
\def\eeqnn{\end{equation*}}
\def\beqna{\begin{eqnarray}}
\def\eeqna{\end{eqnarray}}
\def\beqnan{\begin{eqnarray*}}
\def\eeqnan{\end{eqnarray*}}
\def\GL{\mathrm{GL}}
\def\SL{\mathrm{SL}}
\def\U{\mathrm{U}}
\def\WD{\mathit{WD}}
\def\CC{\mathbb{C}}
\def\ZZ{\mathbb{Z}}
\def\1{\Eins}
\def\varddots{\mathinner{\mkern1mu
    \raise\p@\hbox{.}\mkern2mu\raise4\p@\hbox{.}\mkern2mu
    \raise7\p@\vbox{\kern7\p@\hbox{.}}\mkern1mu}}
\newcommand{\ep}{\varepsilon}
\newcommand{\BIGOP}[1]{\mathop{\mathchoice%
{\raise-0.22em\hbox{\huge $#1$}}%
{\raise-0.05em\hbox{\Large $#1$}}{\hbox{\large $#1$}}{#1}}}
\newcommand{\BIGboxplus}{\mathop{\mathchoice%
{\raise-0.35em\hbox{\huge $\boxplus$}}%
{\raise-0.15em\hbox{\Large $\boxplus$}}{\hbox{\large $\boxplus$}}{\boxplus}}}
\title{The local Gan-Gross-Prasad conjecture for $U(n+1) \times U(n)$ : a non-generic case}
\author{Jaeho Haan}
\address{Center for Mathematical Challenges,
Korea Institute for Advanced Study,
85 Hoegiro Dongdaemun-Gu,
Seoul 130-722,
South Korea}
\email{jaehohaan@gmail.com}
\keywords{Gan-Gross-Prasad conjecture, non-generic parameter, unitary groups, local theta correspondence, $\epsilon$-factor.}
\date{\today}
\begin{document}

\begin{abstract}The local Gan-Gross-Prasad conjecture of unitary groups, which is now settled by the works of Plessis, Gan and Ichino, says that for a pair of generic $L$-parameters of $(U(n+1),U(n))$, there is a unique pair of representations in their associated Vogan $L$-packets which produces the Bessel model. In this paper, we examined the conjecture for a pair of $L$-parameters of $\big(U(n+1), U(n)\big)$ as fixing a special non-generic parameter of $U(n+1)$ and varing tempered $L$-parameters of $U(n)$ and observed that there still exist a Gan-Gross-Prasad type formulae depending on the choice of $L$-parameter of $U(n)$.

\end{abstract}
\maketitle
\section{\textbf{Introduction}}The local  \emph{Gan-Gross-Prasad} (GGP)  conjecture concerns the restriction problem of real or $p$-adic Lie groups. Though the GGP conjecture is now formulated for all classical groups, we will restrict ourselves only to unitary groups in this paper.

Let $E/F$ be a quadratic extension of local fields of characteristic zero. Let $V_{n+1}$ be a Hermitian space of dimension $n+1$ over $E$ and $W_n$ a skew-Hermitian space of dimension $n$ over $E$.
Let $V_n \subset V_{n+1}$  be a nondegenerate subspace of codimension $1$ and we set 
\[  G_n =  \U(V_n) \times \U(V_{n+1}) \quad \text{or} \quad \U(W_n) \times \U(W_n) \]
and
\[   H_n = \U(V_n) \quad \text{or} \quad \U(W_n).\]
Then we have a diagonal embedding
\[ \Delta:  H_n \hookrightarrow G_n. \]

Let $\pi$ be an irreducible smooth representation of $G_n$. In the Hermitian case, one is interested in computing 
\[  \dim_\CC \Hom_{\Delta H_n} ( \pi, \CC)\]
and it is called the \emph{Bessel} case (B) of the GGP conjecture.
To describe the GGP conjecture for the skew-Hermitian case, we need another data, that is a Weil representation $\omega_{\psi, \chi, W_n}$. (Here, $\psi$ is a nontrivial additive character of $F$ and $\chi$ is a character of $E^{\times}$ whose restriction to $F^{\times}$ is the non-trivial quadratic character associated to $E/F$ by local class field theory.)
In this case,  one is interested in computing
\[  \dim_\CC \Hom_{\Delta H_n} ( \pi, \omega_{\psi,\chi, W_n}) \]
and we call this the \emph{Fourier--Jacobi} case (FJ) of the GGP conjecture. To treat them simultaneously, we use the notation $\nu = \CC$ or $\omega_{\psi,\chi, W_n}$ in the respective cases.

By the results of \cite{agrs} and \cite{sun}, it is known \[  \dim_\CC \Hom_{\Delta H_n} ( \pi, \nu ) \le 1. \] So our next task should be specifying irreducible smooth representations $\pi$ such that $$\Hom_{\Delta H_n} ( \pi, \nu ) = 1. $$

 In a seminal paper \cite{Gan2}, Gan, Gross and Prasad proposed a conjecture which contains both (partial) mulitiplicity one theorem and the answer to the above question. 
To explain it, we need the notion of relevant pure inner forms of $G_n$ and relevent Vogan $L$-packets.  A pure inner form of $G_n$ is a group of the form 
\[  G_n' = \U(V_{n+1}') \times \U(V'_{n}) \quad \text{or} \quad \U(W'_n) \times \U(W'_n) \]
where $V_{n}' \subset V_{n+1}'$ are hermitian spaces over $E$ whose dimensions are $n$ and $n+1$ respectively and $W_n'$ is a $n$-dimensional skew-hermitian spaces over $E$.

\noindent Furthermore, if \[  \quad  V_{n+1}'/V_{n}' \cong V_{n+1}/V_{n} \quad \text{or} \quad W_n'=W_n'', \] we say that $G_n'$ is a relevant pure inner form of $G_n$.

If $G_n'$ is relevant of $G_n$, we set
\[   H'_n = \U(V'_n) \quad \text{or} \quad \U(W'_n) \]
so that we have a diagonal embedding
\[ \Delta:  H'_n \hookrightarrow G'_n. \]

For an $L$-parameter $\phi$ of $G_n$, there is the associated (relevant) Vogan $L$-packet $\Pi_{\phi}$  which consists of certain irreducible smooth representations of $G_n$ and its (relevant) pure inner forms $G_n'$ whose corresponding $L$-parameter is $\phi$. We denote the relevant Vogan $L$-packet of $\phi$ by $\Pi^R_{\phi}$.

Now we can loosely state the GGP conjecture  as follows:

\begin{GPconj}
For a generic $L$-parameter $\phi$ of $G_n$, the followings hold:
\begin{enumerate}
\item $\sum_{\pi' \in \Pi^R_{\phi}}\dim_{\CC}\Hom_{\Delta H'_n} ( \pi', \nu )=1.$

\item Using the local Langlands correspondence for unitary group, we can pinpoint $\pi' \in \Pi^R_{\phi}$ such that $$\dim_{\CC}\Hom_{\Delta H'_n} ( \pi', \nu )=1.$$ 
\end{enumerate}
\end{GPconj}

Following the strategy of Waldspurger (\cite{W2}--\cite{W5}) for orthogonal groups, Beuzart-Plessis \cite{bp1},\cite{bp2},\cite{bp3} established (B) of the GGP conjecture for tempered $L$-parameter $\phi$.  Building upon Plessis's work, Gan and Ichino \cite{iw} proved (FJ) for tempered case first by establishing the precise local theta correspondence for almost equal rank unitray groups and then extended both (B) and (FJ) to generic cases. Because the generic case is now completely settled, it is natural to turn our attention to the non-generic case.

In \cite{Haan}, the author considered a non-generic case of (B) when $n=2$. This paper can be seen as an extension of the result, because we shall investigate non-generic case of (B) for all $n \ge 2$ when an $L$-parameter of $G_{n}$ involves some non-generic $L$-parameter of $U(V_{n+1})$. We roughly state our main result in the following.

\begin{main}For all $n\ge 1$, let $\phi^{NG}$ be a special non-generic $L$-parameter of $U(V_{n+2})$ obtained from the theta lift of a certain $L$-parameter of $U(V_n)$ and $\phi^T$ be a tempered $L$-parameter of $U(V_{n+1})$. Then for the $L$-parameter $\phi=\phi^{NG} \otimes \phi^T$ of $G_{n+1}=U(V_{n+2}) \times U(V_{n+1})$, we have 

\begin{enumerate}
\item If the $L$-parameter $\phi^T$ does not contain $\chi_{W}^{-1}$, $$\sum_{\pi' \in \Pi^R_{\phi}}\dim_{\CC}\Hom_{\Delta H'_{n+1}} ( \pi', \CC )=0$$
\item Suppose that $\phi^T$ contains $\chi_{W}^{-1}$. Then $$\sum_{\pi' \in \Pi^R_{\phi}}\dim_{\CC}\Hom_{\Delta H'_{n+1}} ( \pi', \CC )\ge1.$$ 

\item If the multiplicity of $\chi_{W}^{-1}$ in $\phi^T$ is one, we have  $$\sum_{\pi' \in \Pi^R_{\phi}}\dim_{\CC}\Hom_{\Delta H'_{n+1}} ( \pi', \CC )=1.$$ Furthermore, using the local Langlands correspondence,  we can explicitly describe $\pi' \in \Pi^R_{\phi}$ such that \begin{equation}\label{p}\dim_{\CC}\Hom_{\Delta H'_{n+1}} ( \pi', \CC )=1.\end{equation}
\end{enumerate}

\end{main}

The rest of the paper is organized as follows; In Section 2 and 3, we give a brief summary on the local Langlands correspondence and the local theta correspondence for unitary groups. After describing these background materials,  we prove  our main Theorem \ref{thm2} in Section 4.

\subsection{Notation}\label{not}We fix some notations we shall use throughout this paper:
\begin{itemize}
\item  $E/F$ is a quadratic extension of local fields of characteristic zero.

\item $\text{Fr}_E$ is a Frobenius element of Gal$(\bar{E}/E)$.

\item  $c$ is the non-trivial element of Gal$(E/F)$.

\item  The trace and norm maps from $E$ to $F$ are denoted by $\Tr_{E/F}$ and $\N_{E/F}$ respectively.

\item $\delta$ is an element of $E^{\times}$ such that  $\Tr_{E/F}(\delta)=0$.

\item $\psi$ is an additive character of $F$.

\item  $\omega_{E/F}$ is the non-trivial quadratic character assosiated to $E/F$ by local class field theory.

\item For an linear algebraic group $G$, denote its $F$-points by $G(F)$.

\end{itemize}

\section{\textbf{Local Langlands correspondence for unitary group}}
The local Langlands correpondence (LLC) for unitary groups, which parametrizes irreducible smooth representations of $U(n)$, is now known by the work of Mok \cite{Mok} and Kaletha-M\'inguez-Shin-White \cite{kmsw}  under some assumption on the weighted fundamental lemma. Since the GGP conjecture and our main results are both expressed using the LLC, we shall assume the LLC for unitary group. In this section, we list some of its properties we shall use in this paper. Note that much of this section are excerpts from Section.2 in \cite{iw}.

\subsection{Hermitian and skew-Hermitian spaces}
Fix $\varepsilon \in \{\pm1\}$. Let $V$ be a finite $n$-dimensional vector space over $E$ equipped with a nondegenerate $\varepsilon$-hermitian $c$-sesquilinear form $\langle \cdot, \cdot \rangle_V : V \times V \rightarrow E$.
It means that for $v, w \in V$ and $a, b \in E$, the following holds : \[
 \langle a v, b w \rangle_V = a b^c \langle v, w \rangle_V, \qquad
 \langle w, v \rangle_V = \varepsilon \cdot \langle v, w \rangle_V^c.
\]

\noindent We define $\disc V $ by $ (-1)^{(n-1)n/2} \cdot \det V$ so that 
\[
 \disc V \in
 \begin{cases}
  F^{\times} / \mathrm{N}_{E/F}(E^{\times})
  & \text{if $\varepsilon = +1$;} \\
  \delta^n \cdot F^{\times} / \mathrm{N}_{E/F}(E^{\times})
  & \text{if $\varepsilon = -1.$}
 \end{cases}
\]
Using $\disc V$, we  define $\epsilon(V) \in \{\pm 1\}$ by 
\begin{equation}\label{sign}
 \epsilon(V) = 
 \begin{cases}
  \omega_{E/F}(\disc V) & \text{if $\varepsilon = +1$;} \\
  \omega_{E/F}(\delta^{-n} \cdot \disc V) & \text{if $\varepsilon = -1$.}
 \end{cases}
\end{equation}
 For a given positive integer $n$, it is known (by a theorem of Landherr) that there are exactly two isomorphism classes of $\varepsilon$-hermitian spaces of dimension $n$ and they are distinguished from each other by $\epsilon(V)$\\
The unitary group of $V$ is defined by
\[
  \U(V) = \{ g \in \GL(V) \, | \,
 \text{$\langle g v, g w \rangle_V =  \langle v, w \rangle_V$ for $v, w \in V$}
 \}
\]
and it  turns out to be connected reductive algebraic group defined over $F$.
\subsection{$L$-parameters and component groups}

Let $I_F$ and $\text{Fr}_F$ be the inertia subgroup and Frobenious element of $\text{Gal}(\bar{F}/F)$ respectively.  Let $W_F=I_F \rtimes \langle \text{Fr}_F \rangle $ be the Weil group of $F$  and $\WD_F = W_F \times \SL_2(\CC)$ the Weil-Deligne group of $F$.
We say that a homomorphism $\phi: \WD_F \rightarrow \GL_n(\CC)$ is a representation of $\WD_F$ if 
\begin{enumerate}
\item $\phi(\text{Fr}_F)$ is semisimple
\item $\phi$ is continuous
 \item the restriction of $\phi$ to $\SL_2(\CC)$ is induced by a morphism of algebraic groups $\SL_2 \to \GL_n$
\end{enumerate}
We call $\phi$ is tempered if the image of $W_F$ is bounded. The contragredient representation $\phi^{\vee}: \WD_F \rightarrow \GL_n(\CC)$ of $\phi$ is defined by $$\phi^\vee(w) := {}^t\phi(w)^{-1} \text{  for all $w\in WD_F$. }$$
We choose $s \in W_F \smallsetminus W_E$. The Asai representation $\text{As}(\phi):WD_F \to \GL_{n^2}(\mathbb{C})$ of $\phi$ is defined as follows;
$$\text{As}(\phi)(w)=\begin{cases}\phi(w)\otimes \phi(s^{-1}ws) \quad \quad  \quad \quad\text{ if } w\in WD_E \\ \iota \circ (\phi(s^{-1}w) \otimes \phi(ws)) \quad \quad \text{ if } w \in WD_F \smallsetminus WD_E\end{cases}$$ where $\iota$ is the linear isomorphism of $\mathbb{C}^n\otimes \mathbb{C}^n$ given by $\iota(x \otimes y )=y \otimes x$. Note that the equivalence class of $\text{As}(\phi)$ is independent of the choice of $s$. We denote $\text{As}^+(\phi)$ by $\text{As}(\phi)$ and $\text{As}^{-}(\phi)$ by $\text{As}(\chi  \otimes \phi)$. 

 If $\phi$ is a representation of $\WD_E$, we define a representation $\phi^c: \WD_E \rightarrow \GL_n(\CC)$ by $\phi^c(w) = \phi(sws^{-1})$  for all $w\in WD_E$.
(Note that the equivalence class of $\phi^c$ is independent of the choice of $s$.) We say that $\phi$ is conjugate self-dual if there is an isomorphism $b: \phi \mapsto (\phi^{\vee})^{c}$. Since there is a natural isomorphism $(((\phi^{\vee})^c)^{\vee})^c\simeq \phi$, we can consider $(b^{\vee})^c$ as an isomorphism from $\phi$ onto $(\phi^{\vee})^c$. For $\varepsilon \in \{\pm 1\}$,  if there exists such an isomorphism $b$ satisfying the extra condition $(b^{\vee})^c=\varepsilon \cdot b$, we call $\phi$ conjugate self-dual with sign $\varepsilon$.

Let $V$ be a $n$-dimensional $\varepsilon$-hermitian space over $E$. An $L$-parameter for the unitary group $\U(V)$ is an equivalence classes of conjugate self-dual representations $\phi:\WD_E \longrightarrow GL_n(\CC)$ of sign $(-1)^{n-1}$.
Given a $L$-parameter $\phi$ of $U(V)$, we say that  $\phi$ is generic if its Asai $L$-function $L(s,\text{As}^{(-1)^{n-1}} (\phi))$ is holomorphic at $s=1$.
We decompose $\phi$ as a direct sum
\[  \phi = m_i \phi_i + \cdots + m_r \phi_r + \phi'+  {}^c\phi'^{\vee}\]
where $\phi_i$ are distinct irreducible conjugate self-dual representations of $\WD_E$ with the same type as $\phi$ and multiplicitys $m_i$ and $\phi'$ is a sum of irreducible conjugate self-dual representations not of the same type as $\phi$. 
We say that $\phi$ is discrete if $m_i=1$ for all $i$ and $\phi'$ does not appear. (i.e. all irreducible summand of $\phi$ are of same type as $\phi$.) For an $L$-parameter $\phi$, we can associate its component group $S_{\phi}$ as follows:
\[  S_{\phi}  = \prod_j  (\ZZ / 2\ZZ) a_j. \]
Namely, $S_{\phi}$ is a free  $\mathbb{Z}/2\mathbb{Z}$-module of rank $r$ with a  canonical basis $\{ a_j \}$ indexed by the summands $\phi_i$ in $\phi$. 
We define $z_\phi \in S_{\phi}$ as\[
 z_{\phi} = (m_j a_j) \in \prod_j  (\ZZ / 2\ZZ) a_j.
\]
and call it the central element of $S_{\phi}$.
\subsection{The LLC for unitary group}

In this subsection, we introduce the LLC for unitary groups and state some of its properties which we need later.

Let $V^+$ and $V^-$ be the $n$-dimensional $\varepsilon$-Hermitian spaces with $\epsilon(V^+) = +1$, $\epsilon(V^-) = -1$ respectively.  Let $\Irr(\U(V^{\bullet}))$ be the set of  equivalence classes of irreducible smooth representations of $\U(V^{\bullet})$.

\noindent For an $L$-parameter $\phi$ of $\U(V^{\pm})$, there is an associated finite subset $\Pi_{\phi} \subset \Irr(\U(V^{\pm}))$, so called the Vogan $L$-packet satisfying
\[  \Irr(\U(V^+)) \sqcup \Irr(\U(V^-))  = \bigsqcup_{\phi}  \Pi_{\phi}. \]
(Here, $\phi$ on the right-hand side runs over all equivalence classes of $L$-parameters for $\U(V^\pm)$.)

\noindent For each $\epsilon = \pm 1$, we denote the set of irreducible representations of $\U(V^{\epsilon})$ in $ \Pi_{\phi}$ by $ \Pi_{\phi}^{\epsilon}$. Then we have a decomposition as follows:
\[  \Pi_{\phi}  = \Pi_{\phi}^+\sqcup   \Pi_{\phi}^-. \]

As explained in \cite[\S 12]{Gan2}, if we fix an additive character of $\psi:F^{\times} \to \CC$, there is a bijection $$J^{\psi}(\phi):\Pi_{\phi} \to \Irr( S_{\phi}),$$
where $\Irr( S_{\phi})$ is the set of irreducible characters of $S_{\phi}$.

When $n$ is odd, this bijection is canonical but when $n$ is even, it depends on the choice of $\psi$. More precisely, it is determined by the $\N_{E/F}(E^{\times})$-orbit of nontrivial additive characters 
\[
\begin{cases}
 \psi^E:E/F \rightarrow \CC^{\times} & \text{if $\varepsilon = +1$;} \\
 \psi:F \rightarrow \CC^{\times} & \text{if $\varepsilon = -1$.}
\end{cases}
\] 
where \[  \psi^E(x) : = \psi(\tfrac{1}{2} \Tr_{E/F}(\delta x)). \] 
So, it is reasonable to separate $J^{\psi}$ using the notations $J_{\psi^E}$ and  $J_{\psi}$ according to $\varepsilon = +1$ or $\varepsilon = -1$. However, when $n$ is even, we write
\[J^{\psi}=
\begin{cases}
 J_{\psi^E} & \text{if $\varepsilon = +1$;} \\
 J_{\psi} & \text{if $\varepsilon = -1$},
\end{cases} 
\] for convenience and even when $n$ is odd, we use the same notation $J^{\psi}$ for the canonical bijection.\\

Hereafter, we fix an additive character of $\psi:F^{\times} \to \CC$ once and for all and when it comes to the LLC of unitary groups, we shall use a bijection $$J^{\psi}(\phi):\Pi_{\phi} \to \Irr( S_{\phi})$$ for all $L$-parameter $\phi$ of $U(V^{\pm})$ as above. 

Using these fixed bijections, all irreducible smooth representations of $\U(V^{\pm})$ can be labelled as $\pi(\phi,\eta)$ for some unique pair of $L$-parameter $\phi$ of $U(V^{\pm})$ and $\eta \in \Irr( S_{\phi})$.

Lastly, we briefly list some properties of the LLC for unitary group.

\begin{itemize}
\item
$\pi(\phi,\eta)$ is a representation of $\U(V^{\epsilon})$ if and only if $\eta(z_\phi)  = \epsilon$.

\item $\pi(\phi,\eta)$ is a tempereded representation if and only if $\phi$ is tempered.

\item $\pi(\phi,\eta)$ is a discrete series representation if and only if $\phi$ is discrete.

\item There is the canonical identification between the component groups $S_{\phi}$ and $S_{\phi^{\vee}}$. Under such identification, when $\pi$ is $\pi(\phi,\eta)$, its contragradient representation $\pi^{\vee}$ is $\pi(\phi^{\vee},\eta\cdot \nu)$ where \[  
\nu(a_j)  = \begin{cases}
\omega_{E/F}(-1)^{\dim \phi_j} & \text{if $\dim_{\CC} \phi$ is even;}  \\
1 & \text{if $\dim_{\CC} \phi$ is odd.} \end{cases} \]
(The last property follows from a result of Kaletha \cite[Theorem 4.9]{kel}.)
\end{itemize}

\section{\textbf{Local theta correspondence}}

In this section, we state the local theta correspondence and some of its properties for two pairs of unitary groups, namely, $(U(n),U(n+1))$, $(U(n),U(n+2))$. From now on, we shall distinguish the notations for hermitian and skew hermitian spaces. Namely, for $\epsilon = \pm 1$, we denote the $n$-dimensional Hermitian space with $\epsilon(V_n^\epsilon) = \epsilon$ by $V_n^\epsilon$ and  the $n$-dimensional skew-Hermitian space with $\epsilon(W_n^\epsilon) = \epsilon$ by $W_n^\epsilon$.

\subsection{The Weil representation for Unitary groups}
\label{SS:Weil}In this subsection, we introduce the Weil representation associated to the reductive dual pair $(U(V), U(W))$.

Given a Hermitian and a skew-Hermitian spaces $(V,\langle,\rangle_{V})$ and $(W,\langle,\rangle_{W})$  over $E$ respectively, we define the symplectic space $\mathbb{W}_{V,W}$ over $F$ as follows:
$$
\mathbb{W}_{V,W} := \operatorname{Res}_{E/F} (V \otimes_E W)
$$
with the symplectic form $$
\langle v \otimes w,v' \otimes w' \rangle_{\mathbb{W}_{V,W}} := \operatorname{tr}_{E/F}\left(\langle v,v'\rangle_{V}\otimes {\langle w,w' \rangle}_{W}\right).
$$
We also consider the associated symplectic group $Sp(\mathbb{W}_{V,W})$ preserving $\langle \cdot,\cdot \rangle_{\mathbb{W}_{V,W}}$. Then there is a natural map $$U(V) \times U(W)\longrightarrow  Sp(\mathbb{W}_{V,W}).$$ Note that the metaplectic $\mathbb{C}^{1}$-cover  $Mp(\mathbb{W}_{V,W})$  satisfies the following short exact sequence :

$$1\to \CC^{1} \to Mp(\mathbb{W}_{V,W})\to Sp(\mathbb{W}_{V,W})\to 1.$$\\ 
Let $\omega_\psi$ be the Weil representation of $Mp(\mathbb{W}_{V,W})$ with respect to an additive character $\psi:F\to\CC^\times$.

\noindent We choose a pair of unitary characters $(\chi_V,\chi_W)$ of  $E^{\times}$ such that 
$$\chi_V|_{F^{\times}}:=\omega_{E/F}^{\text{dim}_E V} \quad \text{ and } \quad \chi_W|_{F^{\times}}:=\omega_{E/F}^{\text{dim}_E W}.$$
(It is always possible to choose such a pair of characters. For example, fix an unitary character $\chi$ of $E^{\times}$ whose restriction to $F^{\times}$ is $\omega_{E/F}$ and take $\chi_V=\chi^{\text{dim}_E V}$ and $\chi_W=\chi^{\text{dim}_E W}$.)

Then by \cite[\S 1]{Ha}, such a choice $(\chi_{V}, \chi_{W})$ determines a splitting homomorphism $$
\iota_{\chi_{V}, \chi_{W}}:U(V)\times U(W)\to Mp(\mathbb{W}_{V,W})$$ 
and so we have  a Weil representation $\omega_\psi\circ \iota_{\chi_{V},\chi_{W}}$ of $U(V) \times U(W)$.

Throughout the rest of the paper, we denote briefly $\omega_\psi\circ \iota_{\chi_{V},\chi_{W}}$ by $\omega_{\psi,V,W}$ when the choice of $(\chi_V,\chi_W)$ is clear from the context.


\subsection{Local theta correspondence} In this subsection, we state some properties of the local theta correspondence for a pair of unitary groups $(U(V),U(W))$.

Let $\omega_{\psi,W,V}$ be a Weil representation of $U(V) \times U(W)$ and $\pi$ be an irreducible smooth representation of $U(W)$. Then there exists some finite length smooth representation $\Theta_{\psi,V,W}(\pi)$ of $U(V)$ such that \[ \Theta_{\psi,V,W}(\pi) \boxtimes \pi \] is the maximal $\pi$-isotypic quotient of $\omega_{\psi,V,W}$. We write $\theta_{\psi,V,W}(\pi)$ the the maximal semisimple quotient of $\Theta_{\psi,V,W}(\pi)$.
The Howe duality, which is proven for $p \ne 2$ by Waldspurger \cite{w} and by Gan and Takeda \cite{gt1}, \cite{gt2} in general, says that  $\theta_{\psi,V,W}(\pi)$ is either zero or irreducible. It is also known by Kudla \cite{ku} that if $\pi$ is supercuspidal, then  $\Theta_{\psi,V,W}(\pi)$ is zero or irreducible (and thus is equal to $\theta_{\psi,V,W}(\pi)$).

\noindent The local theta correspondence (LTC) describes the relationship between $\pi$ and $\Theta_{\psi,V,W}(\pi)$ (or $\theta_{\psi,V,W}(\pi)$). For the cases $|\text{dim} V - \text{dim} W|\le 1$, D.~Prasad \cite{pra} conjectured the LTC in terms of the LLC and Gan and Ichino proved it in \cite{gi}, \cite{iw}. For the cases $|\text{dim} V - \text{dim} W|\ge 2$, Atobe and Gan \cite{ag} established the precise LTC when the representation $\pi$ of $U(W)$ is tempered.

Since we shall use only two kinds of LTC for $|\text{dim} V - \text{dim} W|=1$ and $|\text{dim} V - \text{dim} W|=2$, we shall elaborate on these two LTC separately. 
Before going on, we fix an additive character $\psi:F \to \CC^{\times}$. 
\subsection{Case (i)}

We first consider the theta correspondence for $\U(V_{n+1}) \times \U(W_n)$. The following summarizes some results of  \cite{ag},\cite{gi}, \cite{iw}.

\begin{thm}  \label{Tae}

Let $\phi$ be an $L$-parameter for $\U(W_n^{\pm})$. Write $$\phi=m_1\phi_1+\cdots +m_r\phi_r +\phi' + {}^c\phi'^{\vee},$$ where $\phi_1,\dots, \phi_r$ are distinct irreducible conjugate self-dual representations of $\WD_E$ with the same type as $\phi$ and $\phi'$ is a sum of irreducible representations of $\WD_E$ which are not conjugate self-dual with the same type as $\phi$. Then we have the following:

\begin{enumerate}

\item Suppose that $\phi$ does not contain $\chi_{V_{n+1}}$.

\begin{enumerate}

 \item For any $\epsilon,\epsilon' \in \{\pm 1\}$ and $\pi \in \Pi_{\phi}^{\epsilon'}$, 
 $\Theta_{\psi, V_{n+1}^{\epsilon}, W_n^{\epsilon'}}(\pi)$ is nonzero and the $L$-parameter of $\theta_{\psi, V_{n+1}^{\epsilon}, W_n^{\epsilon'}}(\pi)$ is 
 \[  \theta(\phi) = (\phi \otimes \chi_{V_{n+1}}^{-1}\chi
_{W_n}) \oplus  \chi_{W_n}. \]

 \item For each $\epsilon = \pm 1$, the theta correspondence map $\pi \mapsto \theta_{\psi, V_{n+1}^{\epsilon}, W_n^{\pm}}(\pi)$  is a bijection between
\[  \Pi_{\phi} \longleftrightarrow \Pi_{\theta(\phi)}^{\epsilon}.  \]  

\item  Write 
\[  S_{\phi}=\prod_i(\ZZ/2\ZZ)a_i \ , \ S_{\theta(\phi)}  =\big( \prod_i(\ZZ/2\ZZ)a_i \big)\times (\ZZ /2 \ZZ) b_1. \]
(Here, the component $(\ZZ/2\ZZ)b_1$ in $S_{\theta(\phi)}$ comes from the summand $\chi_{W_n}$ in $\theta(\phi)$.) \\
Then  the two bijections from LLC \[  J^{\psi}(\phi):  \Pi_{\phi} \longleftrightarrow \Irr(S_{\phi})   \quad \text{and} \quad J^{\psi}(\theta(\phi)):  \Pi_{\theta(\phi)} \longleftrightarrow \Irr(S_{\theta(\phi)}), \]
yields a bijection
\begin{align*}
 \Irr(S_{\phi}) & \longleftrightarrow \Irr^{\epsilon}(S_{\theta(\phi)}) \\
 \eta & \longleftrightarrow \theta(\eta)
\end{align*}
induced by the theta correspondence, where $\Irr^{\epsilon}(S_{\theta(\phi)})$ is the set of irreducible characters $\eta'$ of $S_{\theta(\phi)}$ such that $\eta'(z_{\theta(\phi)}) = \epsilon$. Furthermore, the $\eta$  and $\theta(\eta)$ are related as follows:  
$$\theta(\eta)|_{S_{\phi}}  =\eta.$$

\end{enumerate}

\item Suppose that $\phi$ contains $\chi_V{_{n+1}}$ and fix $\epsilon' \in \{\pm 1\}$. 

\begin{enumerate}

\item For any $\pi \in \Pi_{\phi}^{\epsilon'}$, exactly one of $\Theta_{\psi, V_{n+1}^+, W_n^{\epsilon'}}(\pi)$ or $\Theta_{\psi, V_{n+1}^-, W_n^{\epsilon'}}(\pi)$ is nonzero.

\item If $\Theta_{\psi, V_{n+1}^{\epsilon}, W_n^{\epsilon'}}(\pi)$ is nonzero for some $\epsilon$, then the $L$-parameter of $\theta_{\psi, V_{n+1}^{\epsilon}, W_n^{\epsilon'}}(\pi)$ is
 \[  \theta(\phi) = (\phi \otimes \chi_{V_{n+1}}^{-1} \chi_{W_n} ) \oplus  \chi_{W_n}. \]

\item The theta correspondence map $\pi \mapsto \theta_{\psi, V_{n+1}^{\epsilon}, W_n^{\epsilon'}}(\pi)$ yields a bijection between
 \[  \Pi_{\phi} \longleftrightarrow \Pi_{\theta(\phi)}. \]
 
 \item We can identify $S_{\phi}$ and $S_{\theta(\phi)}$. Under such identification,  the theta correspondence induces a bijection 
\begin{align*}
  \Irr(S_{\phi}) & \longleftrightarrow \Irr(S_{\theta(\phi)}) \\
  \eta & \longleftrightarrow \theta(\eta)
\end{align*}

and $\theta(\eta)=\eta$.
\end{enumerate} 

\item Suppose $\phi$ is tempered. If $\Theta_{\psi, V_{n+1}^{\epsilon}, W_n^{\epsilon'}}(\pi)$ is nonzero for some $\pi \in \Pi_{\phi}^{\epsilon'}$, then $\Theta_{\psi, V_{n+1}^{\epsilon}, W_n^{\epsilon'}}(\pi)$ is irreducible  (and thus equal to $\theta_{\psi, V_{n+1}^{\epsilon}, W_n^{\epsilon'}}(\pi).)$ 

\item If $\phi$ is tempered and $\Theta_{\psi, V_{n+1}^{\epsilon}, W_n^{\epsilon'}}(\pi)$ is nonzero for some $\pi \in \Pi_{\phi}^{\epsilon'}$, then $$\theta_{\psi, W_n^{\epsilon'},V_{n+1}^{\epsilon}}\big(\Theta_{\psi, V_{n+1}^{\epsilon}, W_n^{\epsilon'}}(\pi)\big) \cong \pi.$$ Similarly, if $\phi_1$ is tempered $L$-parameter of $U(V_{n+1}^{\pm})$ and $\Theta_{\psi, W_n^{\epsilon'},V_{n+1}^{\epsilon}}(\pi_1)$ is nonzero for $\pi_1\in \Pi_{\phi_1}^{\epsilon'}$, then $$\Theta_{\psi, V_{n+1}^{\epsilon'},W_{n}^{\epsilon}}\big(\theta_{\psi, W_{n}^{\epsilon}, V_{n+1}^{\epsilon'}}(\pi_1)\big) \cong \pi_1.$$ 

\end{enumerate}

\end{thm}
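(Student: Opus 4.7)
The plan is to treat this theorem as a consolidation of results from \cite{ag}, \cite{gi}, \cite{iw}, handling the four parts separately: the shape of $\theta(\phi)$, the non-vanishing pattern, the component group bijection, and the tempered refinements.

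For parts (1)(a), (1)(b), (2)(b), (2)(c), the explicit formula
\[\theta(\phi) = (\phi \otimes \chi_{V_{n+1}}^{-1} \chi_{W_n}) \oplus \chi_{W_n}\]
is Prasad's conjectural recipe for the almost-equal-rank case, proved in \cite{gi} and \cite{iw}. The dimension count matches $\dim V_{n+1} = n+1$, and the twist by $\chi_{V_{n+1}}^{-1} \chi_{W_n}$ reflects the change of splitting characters for $\U(V_{n+1}) \times \U(W_n)$ in the ambient metaplectic group. The non-vanishing dichotomy in part (2) is the Kudla--Rallis conservation relation at almost equal rank: for each $\pi \in \Pi_{\phi}^{\epsilon'}$, exactly one of $\Theta_{\psi, V_{n+1}^+, W_n^{\epsilon'}}(\pi)$ and $\Theta_{\psi, V_{n+1}^-, W_n^{\epsilon'}}(\pi)$ is nonzero precisely when $\phi$ already contains $\chi_{V_{n+1}}$; otherwise both sides lift nontrivially, giving (1)(a).

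For the component group identification in (1)(c) and (2)(d), I would reduce to a character identity on each generator $a_j$ corresponding to an irreducible summand $\phi_j$ of $\phi$, and in case (1) compute the value at the new generator $b_1$ arising from the $\chi_{W_n}$ summand in $\theta(\phi)$ via a local root number. Proving this identity is the hardest step: it requires the doubling method, the Rallis inner product formula, and see-saw identities computing $\U(V_{n+1})$-periods against theta integrals, as carried out in \cite{gi}, \cite{iw}. The restriction relation $\theta(\eta)|_{S_\phi} = \eta$ then emerges from tracking how the canonical basis of $S_\phi$ embeds into $S_{\theta(\phi)}$ under the twist $\phi \mapsto \phi \otimes \chi_{V_{n+1}}^{-1}\chi_{W_n}$, together with the observation that the new generator $b_1$ carries the sign $\epsilon$ via $z_{\theta(\phi)}$.

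Finally parts (3) and (4) follow from Atobe--Gan \cite{ag}: in the tempered almost-equal-rank range, $\Theta_{\psi, V_{n+1}^{\epsilon}, W_n^{\epsilon'}}(\pi)$ is automatically irreducible (hence equal to $\theta_{\psi, V_{n+1}^{\epsilon}, W_n^{\epsilon'}}(\pi)$), and iterating the explicit recipe from (1)--(2) together with the tempered-to-tempered preservation shows that the double theta lift returns the original representation. The main obstacle is the character identification in (1)(c)/(2)(d), which carries all the analytic input; the other assertions are either formal consequences of Prasad's recipe or direct consequences of the conservation relation and Howe duality in the tempered range.
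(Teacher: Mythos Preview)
Your treatment of parts (i)--(iii) matches the paper's: these are simply cited from \cite{gi}, \cite{iw} (the paper points to Theorem~C.5 there), and your extra commentary on Prasad's recipe, the conservation relation, and the doubling-method input is an accurate sketch of how those references establish the results.

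The one genuine difference is in part (iv). You propose deducing the double-lift identities by ``iterating the explicit recipe from (1)--(2)'' --- i.e.\ tracking the $L$-parameter and component-group character through two applications of the theta lift and concluding that one lands back at $\pi$ (resp.\ $\pi_1$). The paper instead gives a soft, representation-theoretic argument that avoids the LLC entirely: since $\Theta_{\psi,V_{n+1}^\epsilon,W_n^{\epsilon'}}(\pi)\boxtimes\pi$ is an isotypic quotient of the Weil representation while $\Theta_{\psi,W_n^{\epsilon'},V_{n+1}^\epsilon}\bigl(\Theta_{\psi,V_{n+1}^\epsilon,W_n^{\epsilon'}}(\pi)\bigr)\boxtimes\Theta_{\psi,V_{n+1}^\epsilon,W_n^{\epsilon'}}(\pi)$ is the \emph{maximal} one, $\pi$ is forced to be an irreducible quotient of the double big-Theta; Howe duality then pins it down as $\theta_{\psi,W_n^{\epsilon'},V_{n+1}^\epsilon}\bigl(\Theta_{\psi,V_{n+1}^\epsilon,W_n^{\epsilon'}}(\pi)\bigr)$. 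The second half of (iv) is handled the same way after invoking \cite[Prop.~5.5]{ag} for temperedness of the intermediate lift and part (iii) for irreducibility. Your approach also works in principle, but it needs the recipe stated and proved in both directions (from $\U(W_n)$ to $\U(V_{n+1})$ \emph{and} back), plus some care distinguishing $\Theta$ from $\theta$; the paper's argument sidesteps all of this with the universal property of the maximal isotypic quotient.
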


\begin{proof}Except for (iv), other properties are from Theorem C.5 in \cite{gi} and \cite{iw}.  So we shall elaborate on (iv). Since $\theta(\phi)$ contains $\chi_{W_n}$, we know that $\Theta_{\psi, W_n^{\epsilon'},V_{n+1}^{\epsilon}}\big(\Theta_{\psi, V_{n+1}^{\epsilon}, W_n^{\epsilon'}}(\pi)\big)$ is nonzero by the above property (iii) and Theorem 4.1 in \cite{ag}.
Note that $\Theta_{\psi, W_n^{\epsilon'},V_{n+1}^{\epsilon}}\big(\Theta_{\psi, V_{n+1}^{\epsilon}, W_n^{\epsilon'}}(\pi)\big)\boxtimes \Theta_{\psi, V_{n+1}^{\epsilon}, W_n^{\epsilon'}}(\pi)$ is the maximal $\Theta_{\psi, V_{n+1}^{\epsilon}, W_n^{\epsilon'}}(\pi)$-isotypic quotient of $ \omega_{\psi,V_{n+1},W_n}$ and $ \pi \boxtimes \Theta_{\psi, V_{n+1}^{\epsilon}, W_n^{\epsilon'}}(\pi)$ is a $\Theta_{\psi, V_{n+1}^{\epsilon}, W_n^{\epsilon'}}(\pi)$-isotypic quotient of $ \omega_{\psi,V_{n+1},W_n}$. Thus $\pi$ should be a irreducible quotient of $\Theta_{\psi, W_n^{\epsilon'},V_{n+1}^{\epsilon}}\big(\Theta_{\psi, V_{n+1}^{\epsilon}, W_n^{\epsilon'}}(\pi)\big)$. By Howe duality, $\Theta_{\psi, W_n^{\epsilon'},V_{n+1}^{\epsilon}}\big(\Theta_{\psi, V_{n+1}^{\epsilon}, W_n^{\epsilon'}}(\pi)\big)$ has a unique irreducible quotient $\theta_{\psi, W_n^{\epsilon'},V_{n+1}^{\epsilon}}\big(\Theta_{\psi, V_{n+1}^{\epsilon}, W_n^{\epsilon'}}(\pi)\big)$ and thus it is isomorphic to $\pi$. \\On the other hand, since $\Theta_{\psi, W_{n}^{\epsilon}, V_{n+1}^{\epsilon'}}(\pi_1)$ is nonzero, the $L$-parameter of $\theta_{\psi, W_{n}^{\epsilon}, V_{n+1}^{\epsilon'}}(\pi_1)$ is tempered by Proposition 5.5 in \cite{ag}.
\noindent Since $\Theta_{\psi, V_{n+1}^{\epsilon'},W_{n}^{\epsilon}}\big(\theta_{\psi, W_{n}^{\epsilon}, V_{n+1}^{\epsilon'}}(\pi_1)\big) \boxtimes \theta_{\psi, W_{n}^{\epsilon}, V_{n+1}^{\epsilon'}}(\pi_1)$ is the maximal $ \theta_{\psi, W_{n}^{\epsilon}, V_{n+1}^{\epsilon'}}(\pi_1)$-isotypic quotient of $ \omega_{\psi,W_n,V_{n+1}}$ and $\pi_1 \boxtimes \theta_{\psi, W_{n}^{\epsilon}, V_{n+1}^{\epsilon'}}(\pi_1)$ is the $\theta_{\psi, W_{n}^{\epsilon}, V_{n+1}^{\epsilon'}}(\pi_1)$-isotypic quotient of $ \omega_{\psi,W_n,V_{n+1}}$, $\pi_1$ should be a quotient of $\Theta_{\psi, V_{n+1}^{\epsilon'},W_{n}^{\epsilon}}\big(\theta_{\psi, W_{n}^{\epsilon}, V_{n+1}^{\epsilon'}}(\pi_1)\big)$. From property (iii) in the above, we know that $\Theta_{\psi, V_{n+1}^{\epsilon'},W_{n}^{\epsilon}}\big(\theta_{\psi, W_{n}^{\epsilon}, V_{n+1}^{\epsilon'}}(\pi_1)\big)$ is irreducible and so $\Theta_{\psi, V_{n+1}^{\epsilon'},W_{n}^{\epsilon}}\big(\theta_{\psi, W_{n}^{\epsilon}, V_{n+1}^{\epsilon'}}(\pi_1)\big) \cong \pi_1.$ 

\end{proof}

\subsection{Case (ii)} Now we shall consider the theta correspondence for $\U(V_{n+2}^\epsilon) \times \U(W_n^{\epsilon'})$.
\begin{thm}\label{theta}Let $\phi$ be an $L$-parameter of $U(W_n^{\pm})$. Assume that $\Pi_{\phi}$ consists of only supercuspidal representations. For any fixed  $\epsilon\in\{\pm1\}$ and let $\epsilon'\ =\epsilon \cdot \epsilon(\half,\phi \otimes \chi_{V_{n+2}}^{-1},\psi_2^E)$. Then we have:

\begin{enumerate}
\item For any $\pi \in \Pi_{\phi}^{\epsilon'}$, $\Theta_{\psi, V_{n+2}^{\epsilon},W_n^{\epsilon'}}(\pi)$ is nonzero and irreducible.

\item The $L$-parameter $\theta(\phi)$ of $\Theta_{\psi, V_{n+2}^{\epsilon},W_n^{\epsilon'}}(\pi)$ is $\theta(\phi)=(\phi\otimes \chi_{V_{n+2}}^{-1}\chi_{W_n}) \oplus  \left(\chi_{W_n}|\cdot|_E^{\half} \oplus \chi_{W_n}|\cdot|_E^{-\half} \right).$

\item The theta correspondence $\pi \mapsto \theta_{\psi, V_{n+2}^{\epsilon}, W_n^{\epsilon'}}(\pi)$ induces a bijection
\[  \Pi_{\phi} \longleftrightarrow \Pi_{\theta(\phi)}.  \]  
\end{enumerate}

Since $\phi$ is discrete,  we can write $\phi=\phi_1+\cdots +\phi_r $, where $\phi_1,\dots, \phi_r$ are distinct irreducible conjugate self-dual representations of $\WD_E$ with the same type as $\phi$. Let $S_{\phi}\ = S_{\theta(\phi)}\ = \prod_{j=1}^r (\ZZ/2\ZZ)c_j$.

Using the following two bijections from the LLC
\begin{itemize}
\item $J^{\psi}(\phi): \Pi_{\phi} \longleftrightarrow \Irr(S_{\phi}) $
\item $J^{\psi}(\theta(\phi)): \Pi_{\theta(\phi)} \longleftrightarrow \Irr(S_{\theta(\phi)}),$
\end{itemize}
we obtain a bijection induced from the theta correspondence 
\begin{align*}
 \Irr(S_{\phi}) & \longleftrightarrow \Irr(S_{\theta(\phi)}) \\
 \eta & \longleftrightarrow \theta(\eta).
\end{align*}  
Furthermore, the bijection is explicated as follows:
\begin{equation}\label{theta1} \theta(\eta)(c_j)=\eta(c_j) \cdot \epsilon(\frac{1}{2},\phi^{(j)} \otimes \chi_{V_{n+2}}^{-1},\psi_{2}^E ),\end{equation}

\begin{proof} With our choice of $\epsilon'$, the non-vanishing of $\Theta_{\psi, V_{n+2}^{\epsilon},W_n^{\epsilon'}}(\pi)$ follows from Theorem 4.1 (2) in \cite{ag}. Since $\Pi_{\phi}^{\epsilon'}$ consists of supercuspidal representations of $U(W_n^{\epsilon'})$, their theta lifts $\Theta_{\psi, V_{n+2}^{\epsilon},W_n^{\epsilon'}}(\pi)$ are irreducible. The properties (ii), (iii) follow from Theorem 4.3 (4) and Theorem 6.5 in \cite{ag}.
\end{proof}

\end{thm}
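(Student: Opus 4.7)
The plan is to deduce all four assertions from Atobe-Gan's description of the theta correspondence for unitary dual pairs with rank gap two \cite{ag}, together with Kudla's theorem on theta lifts of supercuspidal representations.

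For claim (i), the choice $\epsilon'=\epsilon \cdot \epsilon(\half,\phi \otimes \chi_{V_{n+2}}^{-1},\psi_2^E)$ is exactly the first-occurrence dichotomy for the gap-two case. By the conservation relation combined with the Prasad-style root number formula, $\pi$ has a nonzero big theta lift to $\U(V_{n+2}^{\epsilon})$ if and only if $\epsilon'$ takes the specified value, so I would invoke the corresponding non-vanishing statement from \cite{ag} directly. Once non-vanishing is in hand, irreducibility comes for free: since $\pi$ is supercuspidal, Kudla's theorem guarantees that $\Theta_{\psi,V_{n+2}^\epsilon,W_n^{\epsilon'}}(\pi)$ is either zero or irreducible, hence nonzero and irreducible.

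For claim (ii), I would read off $\theta(\phi)$ from the explicit parameter recipe in \cite{ag}. The summand $\phi \otimes \chi_{V_{n+2}}^{-1}\chi_{W_n}$ is the Prasad-type contribution already familiar from Case (i), while the pair $\chi_{W_n}|\cdot|_E^{\pm\half}$ is the Speh-type block that the gap-two lift adds on. For claim (iii), observe that this extra pair is \emph{not} conjugate self-dual of the relevant type: each of $\chi_{W_n}|\cdot|_E^{\pm\half}$ is conjugate-dual to the other, so together they form a single block of the shape $\phi' \oplus {}^c\phi'^\vee$ and contribute no new generator to the component group. Hence $|S_{\theta(\phi)}|=|S_\phi|$, and combined with the injectivity of $\pi \mapsto \theta_{\psi,V_{n+2}^\epsilon,W_n^{\epsilon'}}(\pi)$ coming from Howe duality, this forces a bijection $\Pi_\phi \longleftrightarrow \Pi_{\theta(\phi)}$.

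Finally, the character formula \eqref{theta1} should follow from the Atobe-Gan description of the component-group bijection via local $\epsilon$-factors: the ratio $\theta(\eta)(c_j)/\eta(c_j)$ is exactly $\epsilon(\half,\phi^{(j)}\otimes\chi_{V_{n+2}}^{-1},\psi_2^E)$, where $\phi^{(j)}$ is the irreducible summand indexed by $c_j$. The main obstacle I anticipate is not conceptual but bookkeeping: the splitting characters $(\chi_V,\chi_W)$, the auxiliary character $\psi_2^E$, and the invariants $\epsilon(V^\pm), \epsilon(W^\pm)$ used in \cite{ag} must be matched carefully against the conventions fixed here, since any mismatch would flip signs in both the dichotomy defining $\epsilon'$ and the character formula \eqref{theta1}.
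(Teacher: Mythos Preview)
Your proposal is correct and follows essentially the same route as the paper: non-vanishing from the dichotomy statement in \cite{ag}, irreducibility from Kudla's theorem for supercuspidals, and the parameter and character formulas from the explicit results of Atobe--Gan (the paper cites Theorem~4.1(2), Theorem~4.3(4), and Theorem~6.5 of \cite{ag} directly). Your added counting argument for (iii)---that the Speh-type pair $\chi_{W_n}|\cdot|_E^{\pm 1/2}$ contributes nothing to the component group, so $|S_{\theta(\phi)}|=|S_\phi|$ and injectivity from Howe duality upgrades to a bijection---is a helpful unpacking of what the paper leaves implicit in its citation, but it is not a different method.
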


\begin{rem}\label{rem4}There is an $L$-parameter which satisfies our assumption. For example, if $E/F$ is an unramified extension, DeBacker and Reeder \cite[\S 12]{dr} defined the notion of tamely regular semisimple elliptic Langlands parameter and showed that its $L$-packet consists of depth-zero supercuspidal representations. 

\noindent For symplectic and special orthogonal groups, the condition for $\phi$ to be supercuspidal in the above sense is known by Moeglin and Xu [\cite{Mo0}, \cite{Xu}]. In view of its criterion, we can make the following conjecture for the unitary groups.

\begin{con}For an $L$-parameter $\phi:\WD_F \rightarrow \GL_n(\CC)$ of $\U(V^{\pm})$, its $L$-packet $\Pi_{\phi}$ consists of supercuspidal representations if and only if $\phi$ is discrete and its restriction to $\SL_2(\CC)$ is trivial.

\end{con}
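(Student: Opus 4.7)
The plan is to adapt the Moeglin-Xu criterion from symplectic and special orthogonal groups to the unitary setting. Both implications rest on an explicit description, in terms of the $L$-parameter data, of the Jacquet modules of the representations in $\Pi_\phi$, since supercuspidality is equivalent to the vanishing of the Jacquet module with respect to every proper parabolic.

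For the necessity direction, assume every $\pi(\phi,\eta)\in\Pi_\phi$ is supercuspidal. A supercuspidal representation is in particular a discrete series, so by the LLC summary above $\phi$ must be discrete; the remaining task is to prove $\phi|_{\SL_2(\CC)}$ is trivial. I would argue contrapositively: if some summand decomposes as $\phi_i = \rho_i \boxtimes S_{a_i}$ with $a_i\ge 2$, one should exhibit a proper Levi subgroup of the form $\GL_d(E) \times \U(V_{n-2d}^\pm)$ and an irreducible representation $\tau$ of it such that some $\pi(\phi,\eta)$ embeds in the parabolic induction from $\tau$. The natural candidate arises from the \emph{highest derivative} with respect to $\rho_i |\cdot|_E^{(a_i-1)/2}$; by the Moeglin-type reciprocity this derivative should be non-zero precisely when $a_i \ge 2$, yielding the desired non-vanishing Jacquet module and contradicting supercuspidality.

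For the sufficiency direction, assume $\phi$ is discrete with trivial $\SL_2$-restriction. One approach is to combine an Atobe-style realization of the members of $\Pi_\phi$ as socles of standard modules built from the summands $\phi_j$ with an inductive Jacquet-module calculation; the point is that extracting any $\GL$-type direct factor would force the parameter to carry a non-trivial $\SL_2$-component, which is excluded by hypothesis. An alternative route uses the endoscopic character identities of Mok and Kaletha-M\'inguez-Shin-White: the stable character of $\Pi_\phi$ should match, on the general linear side, a representation whose cuspidal support is manifestly supercuspidal when $\phi|_{\SL_2}$ is trivial, and transferring this back via twisted endoscopic transfer shows each member of $\Pi_\phi$ is supercuspidal.

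The hard part, which I expect to absorb the bulk of the work, is the Jacquet-module side. The combinatorics controlling how the $\SL_2$-factors of the summands $\phi_j$ interact with parabolic induction is delicate, and while Atobe, Moeglin and Xu have developed much of the machinery for symplectic and special orthogonal groups, a complete unitary-group analogue demands careful tracking of signs, of the $\psi$-dependence of $J^\psi$ when $n$ is even, and of the compatibility between the Kaletha-M\'inguez-Shin-White normalization of $\Pi_\phi$ and any explicit supercuspidal $L$-packets constructed along the lines of DeBacker-Reeder. This compatibility bookkeeping is where I anticipate the main technical obstacle.
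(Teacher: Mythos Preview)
The statement you are trying to prove is not a theorem in the paper but a \emph{conjecture}. The paper offers no proof whatsoever: it simply observes that Moeglin and Xu established the analogous criterion for symplectic and special orthogonal groups, and then records the expected unitary analogue as a conjecture. There is therefore no ``paper's own proof'' against which to compare your proposal.

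That said, your outline is a reasonable sketch of how one would expect such a result to be established, and indeed largely mirrors the strategy used in the orthogonal/symplectic case. The necessity direction is essentially correct in spirit: discreteness follows immediately from the LLC, and a nontrivial $\SL_2$-factor $\rho_i\boxtimes S_{a_i}$ with $a_i\ge 2$ does allow one to exhibit a nonzero Jacquet module via the segment $\rho_i|\cdot|^{(a_i-1)/2}$. The sufficiency direction, however, is where your sketch is genuinely incomplete rather than merely informal. Showing that \emph{every} Jacquet module vanishes requires more than observing that no $\SL_2$-factor is available to ``absorb'' a $\GL$-type summand; one must rule out all possible reducibility points, and in the unitary case this involves the precise reducibility of induced representations from $\GL_d(E)\times \U(V_{n-2d})$, which depends on poles of intertwining operators and ultimately on the parameter data in a way your sketch does not address. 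Your alternative route via endoscopic character identities is even more schematic: matching stable characters does not by itself transfer supercuspidality, since the transfer relates distributions, not individual representations, and one would need additional arguments (e.g., depth preservation or explicit supercuspidal packet constructions) to conclude.

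In short: you have correctly identified the shape of the argument and the main technical obstacle, but what you have written is a plan for a research project, not a proof, and the paper itself treats the statement as open.
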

\end{rem}

\section{\textbf{Main Theorem}}

We prove our main theorem in this section. Since our main theorem is based upon the results of Plessis and Gan-Ichino, we first elaborate on their results on the GGP conjecture for unitary groups and then we state our main theorem. In  (\cite[\S3]{iw}), Gan and Ichino have made a excellent exposition on the GGP conjecture for both (B)  and (FJ), we shall quote their treatment here. Throughout this section, we fix a nontrivial additive character $\psi:F \to \CC^{\times}$ and make a tacit use of the associated bijection $J^{\psi}$ appearing in the LLC.
\subsection{Pairs of spaces}

To explain the (B) and (FJ) cases of the GGP conjecture simulataneously, we first consider the pair of spaces:
\[ \begin{cases} V_n^{\epsilon} \subset V_{n+1}^{\epsilon} \quad \quad \text{(\textbf{Bessel case})} \\   W_n^{\epsilon} = W_n^{\epsilon}. \quad  \quad \text{ (\textbf{Fourier-Jacobi case})}.
\end{cases} \]
\noindent For $a \in F^{\times}$, we denote a 1-dimensional Hermitian space with form $a \cdot \N_{E/F}$ by $L_a$. Then
\[  V_{n+1}^{\epsilon}/V_n^{\epsilon}   \cong L_{(-1)^n}. \]
Write \[  G_n^\epsilon =  \begin{cases} \U(V_{n+1}^\epsilon) \times \U(V_{n}^\epsilon) \quad \quad \text{(\textbf{Bessel case})} \\  \U(W_n^\epsilon) \times \U(W_n^\epsilon) \quad  \quad \text{ (\textbf{Fourier-Jacobi case})}, \end{cases} H_n^\epsilon =  \begin{cases}  \U(V_n^\epsilon) \quad \quad \text{  \textbf{(Bessel case)}} \\  \U(W_n^\epsilon) \quad  \quad \text{ (\textbf{Fourier-Jacobi case})}. \end{cases}\]

\noindent In both cases, we have a diagonal embedding
\[ \Delta : H_n^\epsilon \hookrightarrow G_n^\epsilon.\]

\noindent For an $L$-parameter $\phi = \phi^{\diamondsuit} \times \phi^{\heartsuit}$ for $G_n^\pm$, we denote its associated component group by
\[  S_{\phi}  = S_{\phi^{\diamondsuit}}  \times S_{\phi^{\heartsuit}}\]
and the set of irreducible characters of $S_{\phi}$ by $\Irr (S_{\phi})=\Irr (S_{\phi}) \times \Irr (S_{\phi})$. 

Note that for an $\eta \in \Irr (S_{\phi})$, its corresponding representation $\pi(\eta) \in \Pi_{\phi}=\Pi_{\phi^{\diamondsuit}} \times \Pi_{\phi^{\heartsuit}}$ (under the LLC) is a representation of $G_n^{\epsilon}$ if and only if 
\[  \eta(z_{\phi^{\diamondsuit}}, \textbf{0})  = \eta(\textbf{0},z_{\phi^{\heartsuit}})  = \epsilon, \]
where \textbf{0} denote the identity element in both $S_{\phi^{\diamondsuit}}$ and $S_{\phi^{\heartsuit}}$.

\subsection{The recipe} 
\label{SS:eta}In this subsection, we shall describe the recipe of the GGP conjecture for both the (B) and (FJ). For an $L$-parameter $\phi =  \phi^{\diamondsuit} \times \phi^{\heartsuit}$ of $G_n^{\epsilon}$, write \[  S_{\phi^{\diamondsuit}}  = \prod_i  (\ZZ /2 \ZZ) a_i \quad \text{and} \quad S_{\phi^{\heartsuit}} = \prod_j  (\ZZ / 2\ZZ) b_j. \]
Note that $\eta \in \Irr (S_{\phi})$ is completely determined by the values $\eta(a_i, \textbf{0})\in \{ \pm 1\}$ and $\eta(\textbf{0},b_j) \in \{ \pm 1\}$. For convenience, we simply write $\eta(a_i, \textbf{0}) $ by $\eta(a_i)$ and $\eta( \textbf{0},b_j)$ by $\eta(b_j)$. 

Now, we define the distinguished characters of $S_{\phi}$ for the (B) and (FJ) cases as follows:

\begin{enumerate}

\item \textbf{(Bessel)}
Set $\psi^E_{-2}(x)  = \psi(-\Tr_{E/F}(\delta x))$. We define $\eta^{\spadesuit}\in \text{Irr}(S_{\phi})$ as follows:
\[
\begin{cases}
 \eta^{\spadesuit}(a_i)  = \epsilon(\frac{1}{2}, \phi^{\diamondsuit}_i \otimes \phi^{\heartsuit}, \psi^E_{-2}); \\
 \eta^{\spadesuit}(b_j)  = \epsilon(\frac{1}{2},  \phi^{\diamondsuit} \otimes \phi^{\heartsuit}_{j}, \psi^E_{-2}).
\end{cases}
\]
\\
\item \textbf{(Fourier-Jacobi)} Set $\psi^E(x)  = \psi(\frac{1}{2}\Tr_{E/F}(\delta x))$ and $\psi^E_2(x)  = \psi(\Tr_{E/F}(\delta x))$. The distinguished character $\eta^{\clubsuit}$ of $S_{\phi}$ depends on the parity of $n=\dim W_n$.\\

\begin{itemize}
\item If $n$ is odd, $\eta^{\clubsuit}$ is defined as
\[ \begin{cases}
 \eta^{\clubsuit}(a_i) = \epsilon(\frac{1}{2}, \phi^{\diamondsuit}_{i} \otimes \phi^{\heartsuit} \otimes \chi^{-1},  \psi_2^E);  \\
  \eta^{\clubsuit}(b_j)  = \epsilon(\frac{1}{2}, \phi^{\diamondsuit} \otimes \phi^{\heartsuit}_{j} \otimes \chi^{-1}, \psi_2^E). \end{cases} \]

\item If $n$ is even, $\eta^{\clubsuit}$ is defined as 
\[  \begin{cases}
\eta^{\clubsuit}(a_i) = \epsilon(\frac{1}{2}, \phi^{\diamondsuit}_i \otimes \phi^{\heartsuit} \otimes \chi^{-1},  \psi^E); \\
  \eta^{\clubsuit}(b_j)  = \epsilon(\frac{1}{2}, \phi^{\diamondsuit} \otimes \phi^{\heartsuit}_{j} \otimes \chi^{-1}, \psi^E). \end{cases} \]
\end{itemize}
\end{enumerate}

\subsection{Theorem (B) and (FJ) for generic parameter}\label{gen}

We state the results of Plessis(\cite{bp1}, \cite{bp2}, \cite{bp3}) and Gan-Ichino(\cite{iw}) on the GGP conjecture.

\begin{itemize}

\item[(B)$_n$]
For a \emph{generic} $L$-parameter $\phi$ for $G_n^\pm = \U(V_{n+1}^\pm) \times \U(V_{n}^\pm)$ and a representation $\pi(\eta) \in \Pi_{\phi}$ of $G_n^{\pm}$,
\[  \Hom_{\Delta H_n^{\pm}}(\pi(\eta), \CC) \ne 0 \Longleftrightarrow  \eta = \eta^{\spadesuit}. \]

\item[(FJ)$_n$]
For a \emph{generic} $L$-parameter $\phi$ for $G_n^\pm = \U(W_n^\pm) \times \U(W_n^\pm)$ and a representation $\pi(\eta) \in \Pi_{\phi}$ of $G_n^{\pm}$, 
\[ \Hom_{\Delta H_n^{\pm}}(\pi(\eta), \omega_{\psi,\chi, W_n}) \ne 0 \Longleftrightarrow \eta = \eta^{\clubsuit}. \]

\end{itemize}

This paper investigates (B)$_{n+1}$ of the GGP conjecture for some non-generic $L$-parameters of $G_{n+1}^\pm$. Let us state our main theorem in the following.

\begin{thm}\label{thm2} Let $E/F$ be a quadratic extension of local fields of characteristic zero and the unitary groups we are considering here are all associated to this extension. For an integer $n\ge 1$, we choose a pair of unitary characters $(\chi_{V},\chi_{W})$ of  $E^{\times}$ such that 
$$\chi_{V}|_{F^{\times}}:=\omega_{E/F}^{n+2} \quad \text{ and } \quad \chi_{W}|_{F^{\times}}:=\omega_{E/F}^{n}.$$ 

\noindent Let $\phi_1$ be a $L$-parameter of $U(W_n^{\pm})$ and we assume that $\Pi_{\phi_1}$ consists of supercuspidal representations. (See Remark \ref{rem4}) Define a non-generic $L$-parameter $\theta(\phi_1)$ of $U(V_{n+2}^{\pm})$ by
\[\theta(\phi_1)=\phi_1\otimes \chi_{V}^{-1}\chi_{W}\oplus  \left( \chi_{W}|\cdot|_E^{\half} \oplus \chi_{W}|\cdot|_E^{-\half} \right).\]
Let $\phi$ be a tempered $L$-parameter of $U(V_{n+1}^{\pm})$. Then we have the followings:
 \begin{itemize}
\item If $\phi$ does not contain $\chi_{W}$, then  \beqna \label{non}{\sum_{(\pi_{n+2}, \pi_{n+1})\in \Pi^{\pm}_{\theta(\phi_1)} \times \Pi^{\pm}_{\phi}}\dim_{\CC}\Hom_{U(V_{n+1}^{\pm})}(\pi_{n+2},\pi_{n+1})}=0.\eeqna \\

\item  If $\phi$ contains $\chi_{W}$, we can write  \[ \phi=\theta(\phi_2)=\phi_2 \otimes \chi_{V}^{-1} \chi^{(-1)^{n}}\chi_{W}\oplus \chi_{W} \] for some temperd $L$-parameter $\phi_2$ of $U(W_n^{\pm})$. Then

 \beqna \label{exi}{\sum_{(\pi_{n+2}, \pi_{n+1})\in \Pi^{\pm}_{\theta(\phi_1)} \times \Pi^{\pm}_{\theta(\phi_2)}}\dim_{\CC}\Hom_{U(V_{n+1}^{\pm})}(\pi_{n+2},\pi_{n+1})}\ge 1.\eeqna
Furthermore, if $\phi_2$ does not contain $\chi_{V}\chi^{(-1)^{n+1}}$, then\beqna  \label{uni}\sum_{(\pi_{n+2}, \pi_{n+1})\in \Pi^{\pm}_{\theta(\phi_1)} \times \Pi^{\pm}_{\theta(\phi_2)}}\dim_{\CC}\Hom_{U(V_{n+1}^{\pm})}(\pi_{n+2},\pi_{n+1}) =1\eeqna and we can explicate $(\pi_{n+2}, \pi_{n+1})\in \Pi^{\pm}_{\theta(\phi_1)} \times \Pi^{\pm}_{\theta(\phi_2)}$ such that $\dim_{\CC}\Hom_{U(V_{n+1}^{\pm})}(\pi_{n+2},\pi_{n+1})=1$ as follows:\\ \\
Decompose$$\phi_1=\phi_1^{(1)}+\cdots +\phi_1^{(r)}  \quad , \quad \phi_2=m_1\phi_2^{(1)}+\cdots +m_s\phi_2^{(s)} +\phi_2' + {}^c\phi_2'^{\vee}$$
and write 
\ $\begin{cases}  S_{\phi_1}=S_{\theta(\phi_1)}= \prod_i  (\ZZ /2 \ZZ) a_i ; \\ S_{\phi_2}= \prod_j  (\ZZ / 2\ZZ) b_j \end{cases}$ and \ \ $S_{\theta(\phi_2)}=\big(\prod_j  (\ZZ / 2\ZZ) b_j \big) \times (\ZZ / 2\ZZ) c_1.$\\

\noindent Then for $(\pi_{n+2},\pi_{n+1})\in \Pi_{\theta(\phi_1)} \times \Pi_{\theta(\phi_2)},$ 
$$\Hom_{U(V_{n+1}^{\epsilon})}(\pi_{n+2},\pi_{n+1})\ne0 \Leftrightarrow (\pi_{n+2},\pi_{n+1})=(\pi_{\theta(\phi_1)}(\eta^{\diamondsuit}),\pi_{\theta(\phi_2)}(\eta^{\heartsuit}))$$ where $(\eta^{\diamondsuit},\eta^{\heartsuit})\in \Irr(S_{\theta(\phi_1)}) \times  \Irr(S_{\theta(\phi_2)})$ the pair of characters of the component group is specified as follows;\\

\noindent If $n$ is odd,
\begin{equation}\label{etao}\begin{cases}\eta^{\diamondsuit}(a_i)=\epsilon(\half, \phi_1^{(i)}\cdot  \chi_{V}^{-1}\chi_W\otimes \phi^{\vee} ,\psi_2^E),\\ \eta^{\heartsuit}(b_j)=\epsilon(\frac{1}{2},\phi_1 \otimes  (\phi_2^{(j)})^{\vee} \otimes \chi^{-1} ,\psi_2^E),\\ \eta^{\heartsuit}(c_1)=\epsilon(\half, \phi_1\cdot  \chi_{V}^{-1}\chi_W\otimes \phi^{\vee} ,\psi_2^E) \cdot \epsilon(\frac{1}{2},\phi_1 \otimes  (\bar{\phi_2})^{\vee} \otimes \chi^{-1} ,\psi_2^E).  \end{cases}\end{equation}  

\noindent If $n$ is even,
\begin{equation}\label{etae}\begin{cases}\eta^{\diamondsuit}(a_i)=\epsilon(\half, \phi_1^{(i)}\cdot  \chi_{V}^{-1}\chi_W\otimes \phi^{\vee} ,\psi^E),\\ \eta^{\heartsuit}(b_j)=\epsilon(\frac{1}{2},\phi_1 \otimes  (\phi_2^{(j)})^{\vee} \otimes \chi^{-1} ,\psi^E),\\ \eta^{\heartsuit}(c_1)=\epsilon(\half, \phi_1\cdot  \chi_{V}^{-1}\chi_W\otimes \phi^{\vee} ,\psi^E) \cdot \epsilon(\frac{1}{2},\phi_1 \otimes  (\bar{\phi_2})^{\vee} \otimes \chi^{-1} ,\psi^E).  \end{cases}\end{equation}  
where $\bar{\phi_2}=m_1\phi_2^{(1)}+\cdots +m_s\phi_2^{(s)}$
\end{itemize}
\end{thm}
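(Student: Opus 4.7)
The plan is to apply a see-saw identity to convert the non-generic Bessel Hom space for $U(V_{n+2}) \times U(V_{n+1})$ into a generic Fourier--Jacobi Hom space for $U(W_n) \times U(W_n)$, and then invoke the $(FJ)_n$ case of the GGP conjecture. Since $\Pi_{\phi_1}$ consists of supercuspidal representations, Theorem \ref{theta} identifies each $\pi_{n+2} \in \Pi_{\theta(\phi_1)}$ with the theta lift $\Theta_{\psi, V_{n+2}, W_n}(\tau_1)$ of a unique $\tau_1 \in \Pi_{\phi_1}$. Decompose $V_{n+2} = V_{n+1} \oplus L$ orthogonally with $\dim L = 1$ (so $U(L) \cong E^1$ is compact), and consider the see-saw
\beqnn
\begin{array}{ccc}
U(V_{n+2}) & & U(W_n) \times U(W_n) \\
\cup & & \cup \\
U(V_{n+1}) \times U(L) & & \Delta U(W_n).
\end{array}
\eeqnn
For each character $\mu$ of $U(L)$, the see-saw identity gives
\beqnn
\Hom_{U(V_{n+1}) \times U(L)}(\pi_{n+2}, \pi_{n+1} \boxtimes \mu) \;\cong\; \Hom_{\Delta U(W_n)}\bigl(\theta_{\psi, V_{n+1}, W_n}(\pi_{n+1}) \otimes \theta_{\psi, L, W_n}(\mu),\, \tau_1\bigr),
\eeqnn
and decomposing the restriction of $\pi_{n+2}$ along $U(L)$ identifies $\Hom_{U(V_{n+1})}(\pi_{n+2}, \pi_{n+1})$ with the direct sum of these Hom spaces over $\mu$.

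For part (i), if $\phi$ does not contain $\chi_W$, no tempered parameter $\phi_2$ of $U(W_n^\pm)$ can satisfy $\theta(\phi_2) = \phi$, so the going-down theta lift $\theta_{\psi, V_{n+1}, W_n}(\pi_{n+1})$ vanishes for every $\pi_{n+1} \in \Pi_\phi$ by Theorem \ref{Tae}(iv). The right-hand side is identically zero, giving \eqref{non}. For (ii) and (iii), when $\phi = \theta(\phi_2)$, the lift $\sigma := \theta_{\psi, V_{n+1}, W_n}(\pi_{n+1}) \in \Pi_{\phi_2}$ is nonzero and $\theta_{\psi, L, W_n}(\mu)$ is a twist of the Fourier--Jacobi Weil representation $\omega_{\psi, \chi, W_n}$; Bessel multiplicity one then isolates the unique $\mu$ for which the see-saw RHS can be nonzero, reducing the Bessel Hom to the Fourier--Jacobi Hom
\beqnn
\Hom_{\Delta U(W_n)}\bigl(\sigma \otimes \omega_{\psi, \chi, W_n},\, \tau_1\bigr),
\eeqnn
governed by $(FJ)_n$ for the tempered pair $(\phi_1, \phi_2)$. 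This yields the existence bound \eqref{exi}. Under the hypothesis of (iii) that $\phi_2$ does not contain $\chi_V \chi^{(-1)^{n+1}}$, Theorem \ref{Tae}(i) provides a bijection $\Pi_{\phi_2} \leftrightarrow \Pi_\phi^\epsilon$ on matching inner forms, so the outer sum over $\pi_{n+1}$ collapses to a single nonzero term and we obtain \eqref{uni}.

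The explicit character formulas \eqref{etao} and \eqref{etae} are obtained by transporting the Fourier--Jacobi distinguished character $\eta^\clubsuit \in \Irr(S_{\phi_1} \times S_{\phi_2})$ from Subsection 4.2 through the two theta-correspondence bijections on component groups. The bijection $S_{\phi_1} \leftrightarrow S_{\theta(\phi_1)}$ of Theorem \ref{theta} transports $\eta^\clubsuit$ to $\eta^\diamondsuit$ via formula \eqref{theta1}, producing the first line of \eqref{etao}--\eqref{etae} after simplification by multiplicativity of epsilon factors. The embedding $S_{\phi_2} \hookrightarrow S_{\theta(\phi_2)}$ from Theorem \ref{Tae}(i)(c) gives $\eta^\heartsuit|_{S_{\phi_2}} = \eta^\clubsuit$, matching the second line, and the new value $\eta^\heartsuit(c_1)$ on the generator attached to the $\chi_W$-summand is pinned down by combining the central-character conditions $\eta^\diamondsuit(z_{\theta(\phi_1)}) = \epsilon = \eta^\heartsuit(z_{\theta(\phi_2)})$ with multiplicativity: this expresses $\eta^\heartsuit(c_1)$ as the product of $\epsilon(\tfrac{1}{2}, \phi_1 \cdot \chi_V^{-1}\chi_W \otimes \phi^\vee, \psi_2^E)$ and $\epsilon(\tfrac{1}{2}, \phi_1 \otimes \bar{\phi_2}^\vee \otimes \chi^{-1}, \psi_2^E)$. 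The main obstacle is this bookkeeping: one must carefully match the additive characters appearing in the $(FJ)_n$ recipe (which switches between $\psi^E$ and $\psi_2^E$ with the parity of $n$) and in the theta-correspondence formulas, and separately analyze the epsilon-factor contributions of the summands $\chi_W|\cdot|_E^{\pm 1/2}$ in $\theta(\phi_1)$ and $\chi_W$ in $\theta(\phi_2)$, which are invisible to the component groups but nonetheless modify the character values.
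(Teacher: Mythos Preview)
Your overall strategy---reduce the Bessel Hom via the see-saw to a Fourier--Jacobi Hom and then invoke $(FJ)_n$---is exactly the paper's approach. But there is a genuine gap in how you state the see-saw identity and, consequently, in how you justify uniqueness.

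The see-saw identity involves the \emph{big} theta lift $\Theta$, not the small lift $\theta$: with $\pi_{n+2}=\Theta_{\psi,V_{n+2},W_n}(\tau_1)$ one has
\[
\Hom_{U(V_{n+1}^{\epsilon})}(\pi_{n+2},\pi_{n+1})\;\cong\;\Hom_{U(W_n^{\epsilon'})}\bigl(\Theta_{\psi,W_n^{\epsilon'},V_{n+1}^{\epsilon}}(\pi_{n+1})\otimes\omega_{\psi,\chi,L_{(-1)^{n+1}},W_n},\;\tau_1\bigr).
\]
Your version with small $\theta$ on the right is not in general an isomorphism. This is harmless for (\ref{non}) (vanishing of $\theta$ is equivalent to vanishing of $\Theta$) and for the existence inequality (\ref{exi}) (since $\theta$ is a quotient of $\Theta$, a nonzero Hom out of $\theta$ embeds into the see-saw Hom), but it is precisely the obstruction in the uniqueness step (\ref{uni}): starting from a nonzero Bessel Hom one only obtains a nonzero Hom out of $\Theta_{\psi,W_n,V_{n+1}}(\pi_{n+1})$, and $(FJ)_n$ cannot be applied until one knows this $\Theta$ is irreducible and tempered.

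This is also where you misplace the hypothesis ``$\phi_2$ does not contain $\chi_V\chi^{(-1)^{n+1}}$''. You cite Theorem~\ref{Tae}(i) to produce a packet bijection, but the dichotomy in Theorem~\ref{Tae} is governed by whether $\phi_2$ contains $\chi_{V_{n+1}}=\chi_V\chi^{(-1)^{n}}$, a \emph{different} character; and in any case a bijection of packets would not by itself force the see-saw Hom to collapse. In the paper's argument the hypothesis instead ensures that $\theta(\phi_2)$ contains $\chi_W$ with multiplicity exactly one, which (via Proposition~5.4 of \cite{ag}) forces $\Theta_{\psi,W_n,V_{n+1}}(\pi_{n+1})$ to be irreducible and tempered; only then does the right side become a genuine tempered $(FJ)_n$ Hom and uniqueness follows. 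Finally, you gloss over the parity of $n$: since $\omega_{\psi,\chi,L_{(-1)^{n+1}},W_n}$ equals $\omega_{\psi,\chi,W_n}$ for $n$ odd but $\omega_{\psi,\chi,W_n}^{\vee}$ for $n$ even, the paper must pass to contragredients (and adjust the splitting characters) in the even case before matching with the $(FJ)_n$ recipe; this is where the split between $\psi_2^E$ and $\psi^E$ in (\ref{etao})--(\ref{etae}) is actually forced.
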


\begin{proof} \noindent  We first prove (\ref{non}). To prove it, suppose that \[ \dim_{\CC}\Hom_{U(V_{n+1}^{\epsilon})}(\pi_{n+2},\pi_{n+1}) \ne 0,  \quad \text{for some $\epsilon \in \{\pm1\}$ and $(\pi_{n+2}, \pi_{n+1})\in \Pi^{\epsilon}_{\theta(\phi_1)} \times \Pi^{\epsilon}_{\phi}.$}\]
\noindent Put $\epsilon'=\epsilon \cdot \epsilon(\half,\phi_1\otimes \chi_{V}^{-1},\psi_2^E)$ and we consider the following see-saw diagram : 

\[
 \xymatrix{
  \U(W_n^{\epsilon'})  \times \U(W_n^{\epsilon'})  \ar@{-}[dr] \ar@{-}[d] & \U(V^{\epsilon}_{n+2}) \ar@{-}[d] \\
  \U(W_n^{\epsilon'}) \ar@{-}[ur] &  \U(V^{\epsilon}_{n+1}) \times \U(L_{(-1)^{n+1}})}
\]
We have three theta correspondence in this diagram : 
\begin{enumerate}
\item $U(V_{n+2}^{\epsilon}) \times U(W_n^{\epsilon})$ relative to the pair of characters $(\chi_W, \chi_V)$;
\item $\U(V^{\epsilon}_{n+1}) \times U(W^{\epsilon'}_n)$ relative to the pair of characters $(\chi_W, \chi_V \cdot \chi^{(-1)^{n}})$;
\item $\U(L_{(-1)^{n+1}}) \times \U(W_n^{\epsilon'})$ relative to the pair of characters $( \chi_W,\chi^{(-1)^{n+1}}).$
\end{enumerate}

By (i) and (iii) of Theorem \ref{theta}, we can write $\pi_{n+2}=\Theta_{\psi,\chi,V_{n+2},W_n}(\sigma)$ for some $\sigma \in \Pi^{\epsilon'}_{\phi_1}$. Then by the see-saw identity, we have 
$$0 \ne \Hom_{U(V_{n+1}^{\epsilon})}(\pi_{n+2},\pi_{n+1}) \simeq \Hom_{U(W_n^{\epsilon'})}(\Theta_{\psi,\chi,W_n,V_{n+1}}(\pi_{n+1}) \otimes  \omega_{\psi,\chi,L_{(-1)^{n+1}}, W_n},\sigma)$$ 
and so we see that $\Theta_{\psi,\chi,W_n,V_{n+1}}(\pi_{n+1}) \ne 0$.
\\Then by Theorem \ref{Tae} (iv), $\pi_{n+1}$ should be a theta lift of some irreducible representation of $U(W_n^{\epsilon'})$, and so by Theorem \ref{Tae}, $\phi$ should contain $\chi_W$. This accounts for (\ref{non}) .

Secondly, we prove (\ref{exi}). 
By (FJ)$_n$, there is some ${\epsilon'}\in \{\pm1\}$ and $(\pi_{\phi_2^{\vee}}, \pi_{\phi_1}) \in \Pi^{\epsilon'}_{\phi^{\vee}_2} \times \Pi^{\epsilon'}_{\phi_1}$ such that $$\Hom_{U(W_n^{\epsilon'})}(\pi_{\phi_{2}^{\vee}}\otimes \pi_{\phi_1},\omega_{\psi,W_n^{\epsilon'}})\ne0.$$
\noindent We shall consider the following see-saw diagram : ($\epsilon$ will be determined soon.)
\[
 \xymatrix{
  \U(W_n^{\epsilon'})  \times \U(W_n^{\epsilon'})  \ar@{-}[dr] \ar@{-}[d] & \U(V^{\epsilon}_{n+2}) \ar@{-}[d] \\
  \U(W_n^{\epsilon'}) \ar@{-}[ur] &  \U(V^{\epsilon}_{n+1}) \times \U(L_{(-1)^{n+1}})}
\]
Note that $ \omega_{\psi,\chi,L_{(-1)^{n+1}}, W_n}=\begin{cases}\omega_{\psi,\chi, W_n} \quad \quad \quad \quad\text{ if $n$ is odd;} \\ \omega^{\vee}_{\psi, \chi, W_n} \quad \quad \quad \quad\text{ if $n$ is even}.\end{cases}$

\noindent Because of the above differences for even and odd $n$, we will deal with even and odd cases separartely.

\noindent \begin{itemize}
\item If $n$ is odd, we put $\epsilon=\epsilon' \cdot \epsilon(\half,\phi_1\otimes \chi_{V}^{-1},\psi_2^E)$ and use the three theta correspondence in the above diagram:
\bigbreak
 \begin{enumerate}
\item $U(V_{n+2}^{\epsilon}) \times U(W_n^{\epsilon})$ relative to the pair of characters $(\chi_W, \chi_V)$;
\item $\U(V^{\epsilon}_{n+1}) \times U(W^{\epsilon'}_n)$ relative to the pair of characters $(\chi_W, \chi_V \cdot \chi^{(-1)^{n}})$;
\item $\U(L_{(-1)^{n+1}}) \times \U(W_n^{\epsilon'})$ relative to the pair of characters $( \chi_W,\chi^{(-1)^{n+1}}).$
\end{enumerate}
\bigbreak

\noindent Since $\pi_{\phi_{2}^{\vee}}, \pi_{\phi_1}$ are both unitary, one has $$\Hom_{U(W_n^{\epsilon'})}\big( (\pi_{\phi^{\vee}_2})^{\vee}\otimes \omega_{\psi,\chi,W_n^{\epsilon'}},\pi_{\phi_1})\ne0.$$
(Note that the $L$-parameter of $(\pi_{\phi_2^{\vee}})^{\vee}$ is $\phi_2$.)\\
Write $\tau:=\Theta_{\psi,V^{\epsilon}_{n+1},W^{\epsilon'}_n}\big((\pi_{\phi^{\vee}_2})^{\vee}\big).$ Then by Theorem \ref{Tae} (i) and (iv), $\tau$ is non-zero and  $\theta_{\psi,W^{\epsilon'}_n,V^{\epsilon}_{n+1}}(\tau)=(\pi_{\phi^{\vee}_2})^{\vee}.$ 
Then $$0 \ne \Hom_{U(W_n^{\epsilon'})}\big( (\pi_{\phi^{\vee}_2})^{\vee}\otimes \omega_{\psi,\chi,W_n^{\epsilon'}},\pi_{\phi_1}) \subseteq \Hom_{U(W_n^{\epsilon'})}(\Theta_{\psi,W^{\epsilon'}_n,V^{\epsilon}_{n+1}}(\tau)\otimes \omega_{\psi,\chi,W_n^{\epsilon'}},\pi_{\phi_1})$$ and so by the see-saw identity, one has $$\Hom_{U(V_{n+1}^{\epsilon})}(\Theta_{\psi,V^{\epsilon}_{n+2},W^{\epsilon'}_{n}}(\pi_{\phi_1}),\tau)\ne 0.$$
Since the $L$-parameter of $\Theta_{V^{\epsilon}_{n+2},W^{\epsilon'}_n}(\pi_{\phi_1})$ is $\theta(\phi_1)$, we proved (\ref{exi}) when $n$ is odd.\\

\item If $n$ is even, we put $\epsilon=\epsilon' \cdot \epsilon(\half,\phi_1^{\vee}\otimes \chi_{V},\psi_2^E)$ and use the three theta correspondence in the above diagram:
\bigbreak
 \begin{enumerate}
\item $U(V_{n+2}^{\epsilon}) \times U(W_n^{\epsilon})$ relative to the pair of characters $(\chi_W^{-1}, \chi_V^{-1})$;
\item $\U(V^{\epsilon}_{n+1}) \times U(W^{\epsilon'}_n)$ relative to the pair of characters $(\chi_W^{-1}, \chi_V^{-1} \cdot \chi^{(-1)^{n}})$;
\item $\U(L_{(-1)^{n+1}}) \times \U(W_n^{\epsilon'})$ relative to the pair of characters $( \chi_W^{-1},\chi^{(-1)^{n+1}}).$
\end{enumerate}
\bigbreak

\noindent Since $\pi_{\phi_1}$ is unitary, $$\Hom_{U(W_n^{\epsilon'})}(\pi_{\phi_{2}^{\vee}}\otimes \omega^{\vee}_{\psi,\chi,W_n^{\epsilon'}},\pi^{\vee}_{\phi_1})\ne0.$$ Write $\tau:=\Theta_{\psi,V^{\epsilon}_{n+1},W^{\epsilon'}_n}\big(\pi_{\phi^{\vee}_2}\big).$ Then by Theorem \ref{Tae} (i) and (iv), $\tau$ is non-zero and $\theta_{\psi,W_n,V_{n+1}}(\tau)=\pi_{\phi^{\vee}_2}.$
Then $$0 \ne \Hom_{U(W_n^{\epsilon'})}\big( \pi_{\phi^{\vee}_2}\otimes \omega^{\vee}_{\psi,\chi,W_n^{\epsilon'}},\pi^{\vee}_{\phi_1}) \subseteq \Hom_{U(W_n^{\epsilon'})}(\Theta_{\psi,\chi,W_n,V_{n+1}}(\tau)\otimes \omega^{\vee}_{\psi,\chi,W_n^{\epsilon'}},\pi^{\vee}_{\phi_1})$$ and so by the see-saw identity, one has $$\Hom_{U(V_{n+1}^{\epsilon})}(\Theta_{\psi,V^{\epsilon}_{n+2},W^{\epsilon'}_{n}}(\pi^{\vee}_{\phi_1}),\tau)\ne 0.$$
Since $\Theta_{\psi,V^{\epsilon}_{n+2},W^{\epsilon'}_{n}}(\pi^{\vee}_{\phi_1})$, $\tau$ are tempered and so unitary, we have
$$\Hom_{U(V_{n+1}^{\epsilon})}\big( \big(\Theta_{\psi,V^{\epsilon}_{n+2},W^{\epsilon'}_{n}}(\pi^{\vee}_{\phi_1})\big)^{\vee},\tau^{\vee}\big)\ne 0$$ and so we proved (\ref{exi}) when $n$ is even.\\
(Note that the $L$-parameters of $\big(\Theta_{\psi,V^{\epsilon}_{n+2},W^{\epsilon'}_{n}}(\pi^{\vee}_{\phi_1})\big)^{\vee}$ and $\tau^{\vee}$ are $\theta(\phi_1)$ and $\theta(\phi_2)$ respectively.)
\end{itemize}

Next, to prove (\ref{uni}), choose some $ (\pi_{n+2},\pi_{n+1})\in \Pi^{\epsilon}_{\theta(\phi_1)} \times \Pi^{\epsilon}_{\theta(\phi_{2})}$ such that  $$\Hom_{U(V_{n+1}^{\epsilon})}(\pi_{n+2},\pi_{n+1})\ne0.$$  Now, we divide the cases according to the parity of $n$ and use the same conjugate self-dual characters for the three theta correspondence as in the proof of (\ref{exi}). 
\begin{itemize} \item If $n$ is odd, put $\epsilon=\epsilon' \cdot \epsilon(\half,\phi_1\otimes \chi_{V}^{-1},\psi_2^E)$. Then by (i) and (iii) of Theorem \ref{theta}, we can write $\pi_{n+2}=\Theta_{\psi,\chi,V_{n+2},W_n}(\sigma)$ for some $\sigma \in \Pi^{\epsilon'}_{\phi_1}$. Using the see-saw identity, one has $$\Hom_{U(V_{n+1}^{\epsilon})}(\pi_{n+2},\pi_{n+1})\cong \Hom_{U(W_n^{\epsilon'})}(\Theta_{\psi, W_n^{\epsilon'},V_{n+1}^{\epsilon}}(\pi_{n+1}) \otimes \omega_{\psi,\chi,W_n^{\epsilon'}},\sigma )\ne 0.$$ In particular, $\Theta_{\psi, W_n^{\epsilon'},V_{n+1}^{\epsilon}}(\pi_{n+1})$ is non-zero and so it is irreducible and tempered by Proposition 5.4 in \cite{ag}. (This is the part where our assumption `$\phi_2$ does not contatin $\chi_{V}\chi^{(-1)^{n+1}}$' is used.)\\
Since  $\Theta_{\psi, W_n^{\epsilon'},V_{n+1}^{\epsilon}}(\pi_{n+1}) $ and $\sigma$ are both unitary, we have $$\Hom_{U(W_n^{\epsilon'})}(\Theta_{\psi, W_n^{\epsilon'},V_{n+1}^{\epsilon}}^{\vee}(\pi_{n+1})\otimes \sigma,\omega_{\psi,\chi,W_n^{\epsilon'}} )\ne0$$ and by Theorem \ref{Tae} (iv), the $L$-parameter of $\Theta_{\psi, W_n^{\epsilon'},V_{n+1}^{\epsilon}}^{\vee}(\pi_{n+1})$ is $\phi_2^{\vee}$. Thus by (FJ)$_n$, we see that $\epsilon'$ and $\sigma$ and $\Theta_{\psi, W_n^{\epsilon'},V_{n+1}^{\epsilon}}^{\vee}(\pi_{n+1})$ (and thus $\Theta_{\psi, W_n^{\epsilon'},V_{n+1}^{\epsilon}}(\pi_{n+1})$) are uniquely determined by $\phi_2^{\vee}$ and $\phi_1$. By Theorem \ref{Tae} (iv), we know that $\pi_{n+1}=\Theta_{\psi, V_{n+1}^{\epsilon},W_n^{\epsilon'}}(\Theta_{\psi, W_n^{\epsilon'},V_{n+1}^{\epsilon}}(\pi_{n+1}))$. \\
Thus $\epsilon=\epsilon' \cdot \epsilon(\half,\phi_1\otimes \chi_{V}^{-1},\psi_2^E)$, $\pi_{n+1}$ and $\pi_{n+2}=\Theta_{\psi, V_{n+2}^{\epsilon},W_n^{\epsilon'}}(\sigma)$ must have been already determined by $\phi_2^{\vee}$ and $\phi_1$.\\

\item  If $n$ is even, put $\epsilon=\epsilon' \cdot \epsilon(\half,\phi_1^{\vee}\otimes \chi_{V},\psi_2^E)$.  Then by (i) and (iii) of Theorem \ref{theta}, we can write $\pi_{n+2}=\Theta_{\psi,\chi,V_{n+2},W_n}(\sigma)$ for some $\sigma \in \Pi^{\epsilon'}_{\phi_1}$.  Using the see-saw identity, one has $$\Hom_{U(V_{n+1}^{\epsilon})}(\pi_{n+2},\pi_{n+1})\cong \Hom_{U(W_n^{\epsilon'})}(\Theta_{\psi, ,W_n^{\epsilon'},V_{n+1}^{\epsilon}}(\pi_{n+1}) \otimes \omega^{\vee}_{\psi,\chi,W_n^{\epsilon'}},\sigma )\ne 0.$$ 
Since  $\sigma$ is unitary, we have $$\Hom_{U(W_n^{\epsilon'})}(\Theta_{\psi, W_n^{\epsilon'},V_{n+1}^{\epsilon}}(\pi_{n+1})\otimes \sigma^{\vee},
\omega_{\psi,\chi,W_n^{\epsilon'}})\ne0$$ and by Theorem \ref{Tae} (iv), the $L$-parameter of $\Theta_{\psi, W_n^{\epsilon'},V_{n+1}^{\epsilon}}(\pi_{n+1})$ is $\phi_2$. Thus by (FJ)$_n$, we see that $\epsilon'$ and $\sigma^{\vee}$ and $\Theta_{\psi, W_n^{\epsilon'},V_{n+1}^{\epsilon}}(\pi_{n+1})$ are uniquely determined by $\phi_2$ and $\phi_1^{\vee}$. By Theorem \ref{Tae} (iv), we know that $\pi_{n+1}=\Theta_{\psi, V_{n+1}^{\epsilon},W_n^{\epsilon'}}(\Theta_{\psi, W_n^{\epsilon'},V_{n+1}^{\epsilon}}(\pi_{n+1}))$. \\
Thus $\epsilon=\epsilon' \cdot \epsilon(\half,\phi_1^{\vee}\otimes \chi_{V},\psi_2^E)$, $\pi_{n+1}$ and $\pi_{n+2}=\Theta_{\psi, V_{n+2}^{\epsilon},W_n^{\epsilon'}}(\sigma)$ must have been already determined by $\phi_2$ and $\phi_1^{\vee}$.
\end{itemize}
Thus we see that the pair of representations in $\Pi_{\theta(\phi_1)} \times \Pi_{\theta(\phi_{2})}$
$$(\pi_{n+2},\pi_{n+1})=\begin{cases}(\Theta_{\psi,V^{\epsilon}_{n+2},W^{\epsilon'}_{n}}(\pi_{\phi_1}),\Theta_{\psi,V^{\epsilon}_{n+1},W^{\epsilon'}_n}\big((\pi_{\phi^{\vee}_2})^{\vee}\big)),    \quad \quad \quad \quad \text {\ if $n$ is odd}  \\( \big(\Theta_{\psi,V^{\epsilon}_{n+2},W^{\epsilon'}_{n}}(\pi^{\vee}_{\phi_1})\big)^{\vee} ,\big(\Theta_{\psi,V^{\epsilon}_{n+1},W^{\epsilon'}_n}(\pi_{\phi^{\vee}_2})\big)^{\vee}) , \quad \quad \quad \text {if $n$ is even} \end{cases}$$
we found in the existence part is the unique one which makes $ \dim_{\CC}\Hom_{U(V_{n+1}^{\pm})}(\pi_{n+2},\pi_{n+1}) =1$. \\From the recipe of (FJ)$_{n}$ and Theorem \ref{Tae} (ii) and Theorem \ref{theta} (iii), we can easily check that  their associated characters are as described in (\ref{etao}) and (\ref{etae}) .\\
\noindent (Note that $\eta^{\diamondsuit}(z_{\theta(\phi_1)})=\eta^{\heartsuit}(z_{\theta(\phi_2)}).$)

\end{proof}


\begin{rem}Even when $\phi_2$ contatins $\chi_{V}\chi^{(-1)^{n+1}}$,  we may have (\ref{uni}) under some assumption on $\phi_2$. We record it here as a theorem with its recipe.
\end{rem}

\begin{thm}\label{thm3} Let $\phi_1,\phi_2$ be two tempered $L$-parameters of $U(W_n^{\pm})$ such that $\phi_1$ is a \textbf{SCLP} and $\phi_2$ contain $\chi_{V}\chi^{(-1)^{n+1}}$. We define
$\theta(\phi_1), \theta(\phi_2)$ as in Theorem \ref{thm2}.
We assume that for any $\pi \in \Pi_{\theta(\phi_2)}$, if $\Theta_{\psi, W_n^{\epsilon'},V_{n+1}^{\epsilon}}(\pi)$ is non-zero, it is irreducible.
 \noindent Then \beqna  \label{uni2}\sum_{(\pi_{n+2}, \pi_{n+1})\in \Pi^{\pm}_{\theta(\phi_1)} \times \Pi^{\pm}_{\theta(\phi_2)}}\dim_{\CC}\Hom_{U(V_{n+1}^{\pm})}(\pi_{n+2},\pi_{n+1}) =1\eeqna and we can explicate $(\pi_{n+2}, \pi_{n+1})\in \Pi^{\pm}_{\theta(\phi_1)} \times \Pi^{\pm}_{\theta(\phi_2)}$ such that $\dim_{\CC}\Hom_{U(V_{n+1}^{\pm})}(\pi_{n+2},\pi_{n+1})=1$ as follows:\\ \\
Write $S_{\phi_1}=S_{\theta(\phi_1)}= \prod_i  (\ZZ /2 \ZZ) a_i, \quad \quad \quad  S_{\phi_2}=S_{\theta(\phi_2)}= \prod_j  (\ZZ / 2\ZZ) b_j.$

\noindent Then for $(\pi_{n+2},\pi_{n+1})\in \Pi_{\theta(\phi_1)} \times \Pi_{\theta(\phi_2)},$ 
$$\Hom_{U(V_{n+1}^{\epsilon})}(\pi_{n+2},\pi_{n+1})\ne0 \Leftrightarrow (\pi_{n+2},\pi_{n+1})=(\pi_{\theta(\phi_1)}(\eta^{\diamondsuit}),\pi_{\theta(\phi_2)}(\eta^{\heartsuit}))$$ where $(\eta^{\diamondsuit},\eta^{\heartsuit})\in \Irr(S_{\theta(\phi_1)}) \times  \Irr(S_{\theta(\phi_2)})$ the pair of characters of the component group is specified as follows;\\

\noindent If $n$ is odd,
\begin{equation}\label{etao2}\begin{cases}\eta^{\diamondsuit}(a_i)=\epsilon(\half, \phi_1^{(i)}\cdot  \chi_{V}^{-1}\chi_W\otimes \phi^{\vee} ,\psi_2^E),\\\eta^{\heartsuit}(b_j)=\epsilon(\frac{1}{2},\phi_1 \otimes  (\phi^{(j)}_2)^{\vee} \otimes \chi^{-1} ,\psi_2^E). \end{cases}\end{equation}  

\noindent If $n$ is even,
\begin{equation}\label{etae2}\begin{cases}\eta^{\diamondsuit}(a_i)=\epsilon(\half, \phi_1^{(i)}\cdot  \chi_{V}^{-1}\chi_W\otimes \phi^{\vee} ,\psi^E),\\\eta^{\heartsuit}(b_j)=\epsilon(\frac{1}{2},\phi_1 \otimes  (\phi^{(j)}_2)^{\vee} \otimes \chi^{-1} ,\psi^E).\end{cases}\end{equation}  

\begin{proof}Since we have already proved the existence part (\ref{exi}) in Theorem \ref{thm2}, it is sufficient  only to prove the uniqueness part. The proof of the uniqueness is essentially same as we have done in (\ref{uni}). In this case, however, we cannot deduce from Proposition 5.4 in \cite{ag} that for $\pi_{n+1}\in \Pi_{\theta(\phi_2)}$, $\Theta_{\psi, W_n^{\epsilon'},V_{n+1}^{\epsilon}}(\pi_{n+1})$ is irreducible and tempered. Instead, it follows from by our assumption and from Proposition 5.5 in \cite{ag}. We omit the detail. 

\end{proof}

\end{thm}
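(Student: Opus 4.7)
The existence half of \eqref{uni2} is immediate from \eqref{exi} of Theorem \ref{thm2}: the see-saw construction carried out there produces a pair $(\pi_{n+2},\pi_{n+1})\in \Pi_{\theta(\phi_1)}\times \Pi_{\theta(\phi_2)}$ with nonzero Bessel period, and nowhere in that argument is the condition $\phi_2 \not\supset \chi_V\chi^{(-1)^{n+1}}$ used. So only uniqueness needs a new argument.

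My plan is to rerun, almost verbatim, the argument given for \eqref{uni} in the proof of Theorem \ref{thm2}, treating the parities of $n$ separately and using the same see-saw diagram, the same conjugate self-dual characters for the three theta correspondences, and the same choice of $\epsilon'$. Concretely: starting from a pair $(\pi_{n+2},\pi_{n+1})\in \Pi_{\theta(\phi_1)}^{\epsilon}\times\Pi_{\theta(\phi_2)}^{\epsilon}$ with $\Hom_{U(V_{n+1}^\epsilon)}(\pi_{n+2},\pi_{n+1})\ne 0$, Theorem \ref{theta}(i),(iii) writes $\pi_{n+2}=\Theta_{\psi,V_{n+2}^\epsilon,W_n^{\epsilon'}}(\sigma)$ for a unique $\sigma\in \Pi_{\phi_1}^{\epsilon'}$, and the see-saw identity transforms this into
\[
\Hom_{U(W_n^{\epsilon'})}\bigl(\Theta_{\psi,W_n^{\epsilon'},V_{n+1}^\epsilon}(\pi_{n+1})\otimes \omega,\ \sigma\bigr)\ne 0
\]
for the appropriate Weil representation $\omega$. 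The aim is then to invoke (FJ)$_n$ applied to $\phi_1$ and to the $L$-parameter of $\Theta_{\psi,W_n^{\epsilon'},V_{n+1}^\epsilon}(\pi_{n+1})$ (which by Theorem \ref{Tae}(iv) is $\phi_2$, or $\phi_2^\vee$ depending on the parity of $n$) so as to pin down $\epsilon'$, $\sigma$, and $\Theta_{\psi,W_n^{\epsilon'},V_{n+1}^\epsilon}(\pi_{n+1})$ uniquely; then $\pi_{n+1}$ is recovered by back-theta-lifting via Theorem \ref{Tae}(iv), and $\pi_{n+2}=\Theta_{\psi,V_{n+2}^\epsilon,W_n^{\epsilon'}}(\sigma)$ is thereby determined.

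The main obstacle is exactly where the proof of \eqref{uni} used Proposition 5.4 of \cite{ag}: namely, one needs $\Theta_{\psi,W_n^{\epsilon'},V_{n+1}^\epsilon}(\pi_{n+1})$ to be irreducible and tempered before (FJ)$_n$ is applicable. In the setting of Theorem \ref{thm3} we have $\chi_V\chi^{(-1)^{n+1}}\subset\phi_2$, so that Proposition is not available, and this is precisely what the hypothesis of the theorem compensates for: it supplies irreducibility directly, while temperedness still follows from Proposition 5.5 of \cite{ag}. Once these are in hand, the rest of the argument is identical to \eqref{uni}. The character formulas \eqref{etao2} and \eqref{etae2} then drop out by composing the recipes already in play: formula \eqref{theta1} of Theorem \ref{theta} for $\eta^\diamondsuit$ on $S_{\theta(\phi_1)}=S_{\phi_1}$, together with the (FJ)$_n$ recipe of Section \ref{SS:eta} and the identity $\theta(\eta)=\eta$ from Theorem \ref{Tae}(ii)(C) for $\eta^\heartsuit$ on $S_{\theta(\phi_2)}=S_{\phi_2}$. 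The absence of a third line for $\eta^\heartsuit(c_1)$ compared to \eqref{etao}/\eqref{etae} reflects exactly that when $\phi_2\supset \chi_V\chi^{(-1)^{n+1}}$ the summand $\chi_W$ of $\theta(\phi_2)$ already appears inside $\phi_2\otimes\chi_V^{-1}\chi^{(-1)^n}\chi_W$, so no new generator $c_1$ is introduced and the collapsed component group receives no extra constraint.
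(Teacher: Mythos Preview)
Your proposal is correct and follows essentially the same approach as the paper: existence is inherited from \eqref{exi} of Theorem~\ref{thm2}, and uniqueness is the argument for \eqref{uni} rerun with the sole modification that irreducibility of $\Theta_{\psi,W_n^{\epsilon'},V_{n+1}^\epsilon}(\pi_{n+1})$ comes from the hypothesis rather than Proposition~5.4 of \cite{ag}, while temperedness is supplied by Proposition~5.5 of \cite{ag}. Your write-up is in fact more detailed than the paper's own proof, which simply records this substitution and omits the remainder.
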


\begin{rem}To make our Theorem \ref{thm3} not vacuous, we give an example of $L$-parameter $\phi_2$ which satisfies the assumption. We consider $\phi_2$ which possesses $\chi_{V}\chi^{(-1)^{n+1}}$ with multiplicity one. Then $\theta(\phi_2)$ contains $\chi_W$ with multiplicity two and we can write $\theta(\phi_2)= 2\chi_W  \oplus \phi_0$ for some tempered $L$-parameter $\phi_0$ not containing $\chi_W$. Then for any $\pi \in \Pi_{\theta(\phi_2)}$, there is a surjective map $\text{Ind}_{P(X_1)}^{U(V_{n+1})}(\chi_W\cdot I_{GL(X_1)}\otimes \pi_0) \twoheadrightarrow \pi$ for some $\pi_0 \in \Pi_{\phi_0}$. (here, $X_1$ is a $1$-dimensional isotropic subspace of $V_{n+1}$, $P(X_{1})$ is the maximal parabolic subgroup of $U(V_{n+1})$ stabilizing $X_1$ and $I_{GL(X_1)}$ is the trivial representation of $GL(X_1)$. For more explanation on the notation, see \cite[\S 5]{ag}) Since $\phi_0$ doesn't contain $\chi_W$, $\Theta_{W_{n-2}, V_{n-1}}(\pi_0)$ is zero and so  by the almost same argument as in \cite[Cor.5.3]{ag} using  \cite[Prop.5.2]{ag}, we get a surjective map $\Theta_{W_{n}, V_{n-1}}(\pi_0) \twoheadrightarrow \Theta_{W_{n}, V_{n+1}}(\pi).$ Thus, if  $\Theta_{W_{n}, V_{n+1}}(\pi)$ is non-zero, then $\Theta_{W_{n}, V_{n-1}}(\pi_0)$ is also non-zero and it is  irreducible by Theorem \ref{Tae} (iii). Then by the above surjective map, we see that $\Theta_{W_{n}, V_{n+1}}(\pi)$ is irreducible.

\end{rem}

\begin{rem} From Theorem \ref{thm2} and Theorem \ref{thm3}, we see that the GGP-conjecture is no longer true for non-generic $L$-parameters. But even in the non-generic case, if two $L$-parameters are closely related to each other, both theorems hints the existence of the generalized GGP type formula. So, it would be very interesting to find a extended version of GGP conjecture including both generic and non-generic cases.

\end{rem}

\subsection*{Acknowledgements} The dept that this paper owes to the work of Gan and Ichino should be evident to the reader. The author would like to thank to Hiraku Atobe for many invaluable comments on the first draft of this paper. We also thank to KIAS for providing me a wonderful place to conduct a research and its generous supports.

\end{document}